\documentclass[11pt]{article}
\usepackage[margin=1in]{geometry}
\usepackage{amsmath,amssymb,amsthm,mathtools}
\usepackage{stmaryrd}
\usepackage[T1]{fontenc}
\usepackage{lmodern}
\usepackage{microtype}
\usepackage[hidelinks]{hyperref}

\DeclareMathOperator{\TV}{TV}
\DeclareMathOperator{\Ber}{Ber}
\DeclareMathOperator{\Mult}{Mult}

\newcommand{\Dirac}[1]{\left\llbracket #1 \right\rrbracket}

\newtheorem{theorem}{Theorem}
\newtheorem{lemma}[theorem]{Lemma}

\theoremstyle{remark}
\newtheorem{remark}[theorem]{Remark}

\newcommand{\E}{\mathbb E}
\newcommand{\R}{\mathbb R}

\newcommand{\dd}{\mathop{}\!\mathrm{d}}
\newcommand{\mathe}{\mathrm{e}}

\newcommand{\LamP}{\Lambda_P}
\newcommand{\LamQ}{\Lambda_Q}

\newcommand{\Vhet}{V}
\newcommand{\Vmf}{\bar V}
\newcommand{\Shet}{S}
\newcommand{\Smf}{\bar S}
\newcommand{\VY}{V}
\newcommand{\SY}{S}
\newcommand{\RY}{R}

\newcommand{\De}[1]{\tilde\Delta\paren{{#1}}}
\newcommand{\Dr}[1]{\Delta\paren{{#1}}}

\renewcommand{\vec}[1]{\boldsymbol{#1}}
\newcommand{\abs}[1]{\left| #1 \right|}
\newcommand{\paren}[1]{\left( #1 \right)}

\newcommand{\set}[1]{\left\{ #1 \right\}}

\newcommand{\beq}{\begin{eqnarray*}}
\newcommand{\eeq}{\end{eqnarray*}}
\newcommand{\beqn}{\begin{eqnarray}}
\newcommand{\eeqn}{\end{eqnarray}}

\title{
A homogenization principle for total variation
}
\author{Aryeh Kontorovich}
\date{\today}

\begin{document}
\maketitle

\begin{abstract}
We prove an inequality comparing 
the variational distance between
pairs of product probability measures 
to its homogenized counterpart.
If $
P_1,\ldots,P_n,
Q_1,\ldots,Q_n
$
are arbitrary probability
measures on a measurable space
and
$
\bar P:=\frac1n\sum_{i=1}^n P_i,
\bar Q:=\frac1n\sum_{i=1}^n Q_i
$, we show that
$$
\TV\!\left(\bigotimes_{i=1}^n P_i,
\bigotimes_{i=1}^n Q_i\right)
\;\ge\;
c\,
\TV(\bar P^{\otimes n},\bar Q^{\otimes n})
,
$$
where $c>0$ is a universal constant.

The proof is based on a one-dimensional representation of total variation between products. 
We embed pairs of probability distributions
$P_i,Q_i$
into positive measures $\eta_i$ on $\R$.
We then define a functional $T$ 
over measures on $\R$
that realizes
TV over products via convolution:
$
\TV\!\left(\bigotimes_{i=1}^n P_i,
\bigotimes_{i=1}^n Q_i\right)
=
T(\eta_1*\cdots *\eta_n).
$
Our main analytic discovery is
that for the relevant class of positive
measures $\eta_i$,
the convolution inequality
$
T(\eta_1*\cdots *\eta_n)
\ge
c\,
T\!\left(
\bar\eta
^{*n}
\right)
$
holds,
where
$
\bar\eta
=
\frac1n\sum_{i=1}^n \eta_i
$.
Finally, a higher-dimensional lifting argument
shows that
$
T\!\left(
\bar\eta
^{*n}
\right)
\ge
\TV(\bar P^{\otimes n},\bar Q^{\otimes n})
$.
To our knowledge, both the exact representation and the convolution inequality are new.
\end{abstract}

\section{Introduction}
For a product distribution
$\vec P=P_1\otimes 
\ldots\otimes P_n$,
its {\em homogenized} version is
$\bar{\vec P}
=\paren{\frac1n \sum_{i=1}^n P_i}^{\otimes n}$.
Since the latter is considerably more analytically
tractable than the former,
it is natural to ask how the heterogeneous product compares with its homogenized counterpart.
In particular,
whereas the total variation
distance between homogeneous
products is fairly well understood
---
it is known
\cite[Corollary 16.2]{Polyanskiy_Wu_2024}
that asymptotically,
$
\TV(P^{\otimes n},Q^{\otimes n})
\approx
1-\exp(-nC(P,Q))
$,
where
$C(P,Q)
=
-\inf_{\lambda\in[0,1]}\log\int(\dd P)^{1-\lambda}(\dd Q)^{\lambda}
$
is the {\em Chernoff information}
---
no such simple, analytically
tractable asymptotic is known
for the inhomogeneous case.

Our main result is an inequality comparing
the total variation 
distance between
two arbitrary product distributions
and their homogenized versions:
\beq
\TV(\vec P,\vec Q)
&\ge&
c\,
\TV(\bar{\vec P},\bar{\vec Q}),
\eeq
where $c>0.1489$ is an absolute constant;
this holds uniformly over all measurable spaces.
For the special case
of
Bernoulli measures (and a fortiori in general), 
it was shown 
in \cite{kontorovich2026tvhomogenization}
that $c\le\frac89$ and this value was conjectured to be optimal.
Understanding the exact optimal constant and the extremizing measures achieving it remains an intriguing open problem.
Choosing $P_1=P_2=\Ber(\frac12)$
and $Q_{1,2}=\Ber(\frac12\pm\varepsilon)$
shows that in general, no reverse inequality can hold.

The proof is based
on an exact one-dimensional representation of 
the
total
variation
distance over
product distributions. 
The analytic and combinatorial heart of the argument
deals with
strictly positive probability mass functions $P_i,Q_i$
on a finite set $\Omega$; after that,
the general case is a straightforward generalization.
For such $P_i,Q_i$,
define
\beqn
\label{eq:etai}
\eta_i:=
\sum_{\omega\in\Omega}
\sqrt{P_i(\omega)Q_i(\omega)}\,
\Dirac{\frac12\log\frac{P_i(\omega)}{Q_i(\omega)}},
\eeqn
where $\Dirac{u}
=\delta_u
$ is the 
Dirac measure associated with the point mass $u\in\R$.

We observe (and prove in Lemma~\ref{lem:admiss-closure} below) that the $\eta_i$ in \eqref{eq:etai}
satisfy
\beqn
\label{eq:admiss}
\int_{\R} \mathe^x\,\eta(\dd x)=1
\qquad\text{and}\qquad
\int_{\R} \mathe^{-x}\,\eta(\dd x)=1
\eeqn
and in general say that
a finitely supported positive measure $\eta$ on $\R$ is \emph{admissible} if it satisfies \eqref{eq:admiss}.
For an admissible measure $\eta$, define
\beqn
\label{eq:Tdef}
T(\eta) &=& \frac12\int_{\R}\abs{\mathe^x-\mathe^{-x}}\,\eta(\dd x).
\eeqn
If $\eta_1,\dots,\eta_n$ are finite positive measures on $\R$, write
\[
\eta_1*\cdots *\eta_n
\]
for their convolution, and write $\eta^{*n}$ for the $n$-fold convolution of
a single measure $\eta$.
We recall that on a measurable space
$(\Omega,\mathcal F)$, the variational distance between two probability measures $\mu,\nu$
is $\TV(\mu,\nu)=\sup_{E\in\mathcal F}|\mu(E)-\nu(E)|$
and for finite $\Omega$ is equivalent to
$\frac12\sum_{\omega\in\Omega}|\mu(\omega)-\nu(\omega)|$.

Our first structural result shows that this
encoding recovers total variation exactly and linearizes products:

\begin{theorem}
\label{thm:encoding}
Let $\Omega$ be a finite set,
$n\ge1$,
and $P_i,Q_i$, $i\in[n]$, strictly positive probability
mass functions on $\Omega$.
If $(P_i,Q_i)\mapsto\eta_i$
are defined as in \eqref{eq:etai}
and $T$ as in \eqref{eq:Tdef},
then 
\beqn
\label{eq:TV=T}
\TV\!\left(\bigotimes_{i=1}^n P_i,\bigotimes_{i=1}^n Q_i\right)
&=&
T(\eta_1*\cdots *\eta_n).
\eeqn
\end{theorem}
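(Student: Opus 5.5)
The plan is to verify the identity directly on the product space $\Omega^n$, treating $\eta_1*\cdots*\eta_n$ as the pushforward of a weighted counting measure. Write $\vec P=\bigotimes_i P_i$ and $\vec Q=\bigotimes_i Q_i$. For $\vec\omega=(\omega_1,\dots,\omega_n)\in\Omega^n$ set
\[
x_i(\omega_i):=\tfrac12\log\frac{P_i(\omega_i)}{Q_i(\omega_i)},\qquad
w_i(\omega_i):=\sqrt{P_i(\omega_i)Q_i(\omega_i)} .
\]
Strict positivity guarantees that all these quantities are finite and positive.

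The first step is to expand the convolution of finitely supported measures, using $\Dirac{a}*\Dirac{b}=\Dirac{a+b}$ and bilinearity:
\[
\eta_1*\cdots*\eta_n=\sum_{\vec\omega\in\Omega^n}\Big(\prod_{i=1}^n w_i(\omega_i)\Big)\Dirac{\textstyle\sum_{i=1}^n x_i(\omega_i)} .
\]
This is a finitely supported positive measure. It is admissible, but the proof does not even need that: $T$ is just the integral of $\tfrac12|\mathe^x-\mathe^{-x}|$, and we can evaluate it termwise.

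The second step is the key pointwise identity. For each $\vec\omega$,
\[
\Big(\prod_i w_i(\omega_i)\Big)\,\mathe^{\pm\sum_i x_i(\omega_i)}
=\prod_i \sqrt{P_i(\omega_i)Q_i(\omega_i)}\Big(\frac{P_i(\omega_i)}{Q_i(\omega_i)}\Big)^{\pm1/2}.
\]
With the $+$ sign this equals $\vec P(\vec\omega)$, and with the $-$ sign it equals $\vec Q(\vec\omega)$. Hence
\[
T(\eta_1*\cdots*\eta_n)=\frac12\sum_{\vec\omega}\Big(\prod_i w_i(\omega_i)\Big)\Big|\mathe^{\sum_i x_i}-\mathe^{-\sum_i x_i}\Big|
=\frac12\sum_{\vec\omega\in\Omega^n}\big|\vec P(\vec\omega)-\vec Q(\vec\omega)\big| .
\]
By the finite-space formula for total variation recalled above, the right-hand side is $\TV(\vec P,\vec Q)$, since $\Omega^n$ is finite.

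There is essentially no obstacle. The only point needing care is that distinct $\vec\omega$ may map to the same atom $\sum_i x_i(\omega_i)$, so the expansion above is a sum with possibly repeated Dirac locations. This causes no trouble: integrating the nonnegative function $\tfrac12|\mathe^x-\mathe^{-x}|$ against a sum of weighted Diracs is linear in the weights, so coinciding atoms simply add. No cancellation occurs inside the absolute value, because the absolute value is applied pointwise at each atom location, and all Diracs sitting at one location share the same value of $|\mathe^x-\mathe^{-x}|$. Thus merging atoms before or after integrating gives the same result.

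In the same way, one can check the admissibility property of \eqref{eq:admiss} for the product: integrating $\mathe^{\pm x}$ gives $\sum_{\vec\omega}\vec P(\vec\omega)=1$ and $\sum_{\vec\omega}\vec Q(\vec\omega)=1$. This confirms that $T$ is evaluated on an admissible measure, as its definition requires.
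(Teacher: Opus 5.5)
Your proof is correct and is essentially the paper's argument. You write $\eta_1*\cdots*\eta_n$ as the pushforward of $\bigotimes_i\eta_i$ under addition, evaluate $T$ termwise (merging repeated atoms harmlessly by linearity), and use $\prod_i\sqrt{P_i(\omega_i)Q_i(\omega_i)}\,\mathe^{\pm\sum_i x_i(\omega_i)}=\vec P(\vec\omega)$, respectively $\vec Q(\vec\omega)$; the paper also spells out the $n=1$ case separately, which your computation already covers.
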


Our main analytic result
is the following convolution inequality:
\begin{theorem}
\label{thm:main-conv}
For every 
$n\ge 1$ and every 
finitely supported 
admissible family
$\eta_1,\dots,\eta_n$ on $\R$,
\beq
T\!\left(\left(\frac1n\sum_{i=1}^n \eta_i\right)^{*n}\right)
&\le&
C_0\,T(\eta_1*\cdots *\eta_n),
\eeq
where 
$
C_0<6.7129
$
is an absolute constant.
\end{theorem}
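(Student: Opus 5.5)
The plan is to exploit two structural facts about $T$ on admissible measures. First, $T$ is a Hellinger-type overlap defect. Admissibility gives $\int \tfrac12(\mathe^x+\mathe^{-x})\,\eta(\dd x)=1$, and $\tfrac12|\mathe^x-\mathe^{-x}| = \tfrac12(\mathe^x+\mathe^{-x})-\mathe^{-|x|}$, so
\[
T(\eta)=1-\int_{\R}\mathe^{-|x|}\,\eta(\dd x).
\]
Second, $T$ is \emph{linear} in $\eta$, and admissibility is preserved by convolution, since $\mathe^{\pm x}$ are characters. Expanding the $n$-fold convolution multilinearly then gives
\[
T(\bar\eta^{*n})=\frac1{n^n}\sum_{f:[n]\to[n]} T\bigl(\eta_{f(1)}*\cdots*\eta_{f(n)}\bigr).
\]
This is an average over random multisets $(N_1,\dots,N_n)\sim\Mult(n;\tfrac1n,\dots,\tfrac1n)$ of $T(\eta_1^{*N_1}*\cdots*\eta_n^{*N_n})$. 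The whole problem then becomes comparing $T$ of the ``resampled'' convolution with $T$ of the convolution in which every index appears exactly once.

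I would split into two regimes according to the Bhattacharyya mass $b_i:=\eta_i(\R)=\int \mathe^{0\cdot x}\dd\eta_i$, which is multiplicative under convolution. From the pointwise bounds $\mathe^{-|x|}\le 1$ and, via Cauchy--Schwarz against admissibility, $\int\mathe^{-|x|}\dd\eta\ge \eta(\R)^2$, I get for any admissible $\eta$
\[
1-\eta(\R)\le T(\eta)\le \sqrt{1-\eta(\R)^2}.
\]
By AM--GM, $\bar b^{\,n}\ge\prod b_i$. Hence, whenever $\prod_i b_i\le 1-\delta$ for a fixed $\delta$ (the ``large distance'' regime), I obtain $T(\eta_1*\cdots*\eta_n)\ge\delta$ while $T(\bar\eta^{*n})\le 1$, which gives the inequality with constant $1/\delta$.

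The main obstacle is the small-distance regime $\prod b_i\ge 1-\delta$, where the Hellinger sandwich loses a square root. Here I would use the multinomial expansion together with a subadditivity-type estimate, $T(\mu*\nu)\le T(\mu)+T(\nu)$. This is the TV triangle inequality for products, transferred to admissible measures via Theorem~\ref{thm:encoding} after realizing admissible measures as encodings. Subadditivity alone is too lossy: it bounds everything by $\sum_i T(\eta_i)$, which can exceed $T(\eta_1*\cdots*\eta_n)$ by a $\sqrt n$ factor. So I would instead compare each resampled term to the balanced one by coupling. I would write $\eta_1^{*N_1}*\cdots*\eta_n^{*N_n}=(\eta_1*\cdots*\eta_n)*\rho_+$ ``minus'' the missing factors, and control the discrepancy through the log-moment generating functions $\psi_i(t)=\log\int\mathe^{tx}\dd\eta_i$. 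These are convex on $[-1,1]$ and vanish at $t=\pm1$, and small $T$ forces the tilted laws to be nearly centered. Concretely, I would try a second-order (variance) expansion around $t=0$, showing that both $T(\eta_1*\cdots*\eta_n)$ and $\E_N T(\cdots)$ are comparable, up to absolute constants, to $\min\bigl(1,\sqrt{\sum_i \sigma_i^2}\bigr)$ plus a first-moment term. Here $\sigma_i^2$ is the curvature of $\psi_i$ at $0$, and I would use that multinomial resampling preserves $\E\sum_i N_i\sigma_i^2=\sum_i\sigma_i^2$.

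Optimizing $\delta$ between the two regimes should then yield an explicit constant, and with careful bookkeeping one of the order of the claimed $C_0<6.7129$.
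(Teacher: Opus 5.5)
\textbf{Large-distance regime.} This part is sound. It is essentially the paper's Case~II. The paper splits on $\alpha=\sum_i(1-\eta_i(\R))$ and also uses the upper bound $T(\bar\eta^{*n})\le\sqrt{1-M^2}$, which gives the better factor $\sqrt{(1+M)/(1-M)}$ instead of your $1/\delta$.

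There is one slip in this regime. Your claim $\int\mathe^{-|x|}\,\eta(\dd x)\ge\eta(\R)^2$ is false: for $P=(1-s,s)$, $Q=(s,1-s)$ the left side is $2s$ and the right side is $4s(1-s)$. Cauchy--Schwarz actually gives $\eta(\R)^2\le\int\mathe^{-|x|}\dd\eta\int\mathe^{|x|}\dd\eta=(1-T(\eta))(1+T(\eta))$, which is the sandwich you state. In any case you only need $T\le1$ in that regime.

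\textbf{Small-distance regime.} This is the genuine gap, and it is the whole analytic content of the theorem. You leave it as a plan, and the plan's central claim is false: $T$ is not comparable, up to absolute constants, to $\min(1,\sqrt{\sum_i\sigma_i^2})$ plus a first-moment term. Take the admissible measure
\[
\eta=(1-p)\Dirac{0}+\tfrac p2\operatorname{sech}(1)\bigl(\Dirac{1}+\Dirac{-1}\bigr),
\]
which has $\eta(\R)\to1$ as $p\to0$, so it lies in your small-distance regime. Then $\psi'(0)=0$ and $T(\eta)=p\tanh 1$, but $\sigma^2=\psi''(0)=p\operatorname{sech}(1)/\eta(\R)\asymp p$. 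So $T\asymp\sigma^2\ll\sigma$, and the lower half of your comparison fails whatever nonnegative first-moment term is added. The same holds for $\eta^{*n}$ with $np\to0$, by subadditivity.

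This is the TV-versus-Hellinger square-root loss you wanted to avoid, reintroduced through the curvature. Your invariant $\E\sum_iN_i\sigma_i^2=\sum_i\sigma_i^2$ only sees second moments. In this regime, however, $T$ depends on the full law of the increments: a few large jumps behave very differently from many small ones. The ``deconvolution'' step, writing the resampled convolution as the balanced one with the missing factors removed, is also not meaningful for positive measures.

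\textbf{What the paper does instead.}
\begin{enumerate}
\item It writes $T(\eta_1*\cdots*\eta_n)=\E|\Psi(U_1,\dots,U_n)|$ with $U_i=\tanh X_i$ and $X_i\sim\cosh(x)\,\eta_i(\dd x)$. The $U_i$ are centered and bounded, and homogenization becomes i.i.d.\ sampling from the averaged law. This is exactly your multinomial resampling.
\item It linearizes $\Psi=\sum_iU_i+R$ with $\E R^2\le\sinh\nu-\nu$ and $\nu\le2\alpha$.
\item It uses symmetrization and Khintchine to get $\E|\sum_iU_i|\asymp\E\sqrt{V}$, where $V=\sum_iU_i^2$.
\item It compares the \emph{square functions} via $\E\mathe^{-\lambda\bar V}=\bigl(\frac1n\sum_i\E\mathe^{-\lambda U_i^2}\bigr)^n\ge\E\mathe^{-\lambda V}$ (AM--GM), together with $\sqrt x=\frac1{2\sqrt\pi}\int_0^\infty(1-\mathe^{-\lambda x})\lambda^{-3/2}\dd\lambda$.
\end{enumerate}
That Laplace-transform ordering, which uses the whole law of $U_i^2$ rather than its mean, is the missing idea. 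Without it or a substitute, neither the inequality nor the constant $C_0<6.7129$ is established. That constant comes from balancing $(4\sqrt2+\tilde\Delta(\varepsilon))/(1-\tilde\Delta(\varepsilon))$ against $\sqrt{(1+\mathe^{-\varepsilon})/(1-\mathe^{-\varepsilon})}$.
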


The final step is to
understand how the admissible encoding map
$(P_i,Q_i)\mapsto\eta_i$
interacts with homogenization.
Putting 
$
\bar\eta=\frac1n\sum_{i=1}^n \eta_i
$,
it would be naive to expect
$(\bar P,\bar Q)\mapsto\bar\eta$,
since the map is highly nonlinear.
We circumvent this obstacle
by interpreting homogenization
as a lifted probability measure
over $[n]\times\Omega$:

\begin{theorem}
\label{thm:lift}
Let $\Omega$ be a finite set,
$n\ge1$,
and $P_i,Q_i$, $i\in[n]$, strictly positive probability
mass functions on $\Omega$
and define 
$(P_i,Q_i)\mapsto\eta_i$
as in \eqref{eq:etai}.
Define probability mass functions on $[n]\times\Omega$ by
\[
\LamP(i,\omega):=\frac1n P_i(\omega),
\qquad
\LamQ(i,\omega):=\frac1n Q_i(\omega).
\]
Then the admissible encoding of the pair $(\LamP,\LamQ)$ in the sense of
\eqref{eq:etai}
is exactly
$
\bar\eta:=\frac1n\sum_{i=1}^n \eta_i.
$
Consequently,
\beq
T(\bar\eta^{*n})
&=&\TV(\LamP^{\otimes n},\LamQ^{\otimes n}).
\eeq
\end{theorem}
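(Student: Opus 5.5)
The plan is to verify the encoding identity by direct computation and then obtain the consequence from Theorem~\ref{thm:encoding} applied to the lifted pair. First, note that $\LamP$ and $\LamQ$ are strictly positive probability mass functions on the finite set $[n]\times\Omega$. Indeed, $\sum_{i,\omega}\frac1n P_i(\omega)=\frac1n\sum_i 1=1$, and positivity is inherited from the $P_i$ and $Q_i$. So the encoding \eqref{eq:etai} applies to the pair $(\LamP,\LamQ)$ with $[n]\times\Omega$ in place of $\Omega$.

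Next, compute the encoding of $(\LamP,\LamQ)$ atom by atom. For a point $(i,\omega)$ the weight is
\[
\sqrt{\LamP(i,\omega)\LamQ(i,\omega)}=\tfrac1n\sqrt{P_i(\omega)Q_i(\omega)}.
\]
The location is $\frac12\log\frac{\LamP(i,\omega)}{\LamQ(i,\omega)}=\frac12\log\frac{P_i(\omega)}{Q_i(\omega)}$, because the common factor $\frac1n$ cancels in the ratio. Summing over $(i,\omega)$ gives
\[
\sum_{i=1}^n\sum_{\omega\in\Omega}\tfrac1n\sqrt{P_i(\omega)Q_i(\omega)}\,\Dirac{\tfrac12\log\tfrac{P_i(\omega)}{Q_i(\omega)}}=\frac1n\sum_{i=1}^n\eta_i=\bar\eta.
\]
The key observation is that the map $(P,Q)\mapsto\eta$ is additive over disjoint pieces of the sample space, and it is $1$-homogeneous under a common scaling of $P$ and $Q$. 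The one point needing care is that different pairs $(i,\omega)$ may share the same location. This is harmless, since Dirac masses at coinciding points simply add, both in \eqref{eq:etai} and in the sum $\sum_i\eta_i$.

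Finally, apply Theorem~\ref{thm:encoding} with sample space $[n]\times\Omega$, with $n$ factors, and with every factor equal to $(\LamP,\LamQ)$. Each factor's encoding is $\bar\eta$, so we obtain
\[
\TV(\LamP^{\otimes n},\LamQ^{\otimes n})=T(\bar\eta^{*n}).
\]
There is no real obstacle. The only thing to check is that Theorem~\ref{thm:encoding} is stated for an arbitrary finite set and arbitrary strictly positive pmfs, which it is, so it applies verbatim on the lifted space.
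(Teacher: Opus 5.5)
Your proposal is correct and takes essentially the same approach as the paper: compute the encoding of $(\LamP,\LamQ)$ atom by atom (the weights acquire a factor $\frac1n$, and the locations are unchanged because $\frac1n$ cancels in the ratio), sum to obtain $\bar\eta$, then apply Theorem~\ref{thm:encoding} on $[n]\times\Omega$ with all $n$ factors equal to $(\LamP,\LamQ)$. You also explicitly check that $\LamP,\LamQ$ are strictly positive probability mass functions on $[n]\times\Omega$, which the paper leaves implicit.
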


From here, the proof of
our homogenization principle is straightforward.

\begin{theorem}\label{thm:main-hom}
Let $(\Omega,\mathcal F)$ be a measurable space
and let
$P_1,\dots,P_n,\ Q_1,\dots,Q_n$
be probability measures on $(\Omega,\mathcal F)$. 
Define
\beq
\vec P:=\bigotimes_{i=1}^n P_i,
\qquad
\vec Q:=\bigotimes_{i=1}^n Q_i,
\qquad
\bar P:=\frac1n\sum_{i=1}^n P_i,
\qquad
\bar Q:=\frac1n\sum_{i=1}^n Q_i,
\qquad
\bar{\vec P}:=\bar P^{\otimes n},
\qquad
\bar{\vec Q}:=\bar Q^{\otimes n}.
\eeq
Then
\beq
\TV(\vec P,\vec Q)&\ge& c\,
\TV(\bar{\vec P},\bar{\vec Q})
,
\eeq
where $c>0.1489$ is an absolute constant.
For finite $\Omega$, the 
homogenized term
has a particularly simple expression
in terms of multinomials:
\beqn
\label{eq:mult}
\TV(\bar{\vec P},\bar{\vec Q})
&=&
\TV(\Mult(n,\bar P),\Mult(n,\bar Q))
.
\eeqn
\end{theorem}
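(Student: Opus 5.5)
The plan is to prove the finite, strictly positive case by chaining the three earlier theorems. Then I pass to general measurable spaces by approximation. The constant will be $c=1/C_0>1/6.7129>0.1489$.

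\emph{Finite, strictly positive case.} By Theorem~\ref{thm:encoding}, $\TV(\vec P,\vec Q)=T(\eta_1*\cdots*\eta_n)$. Lemma~\ref{lem:admiss-closure} gives admissibility of each $\eta_i$, so Theorem~\ref{thm:main-conv} applies and yields $T(\eta_1*\cdots*\eta_n)\ge C_0^{-1}T(\bar\eta^{*n})$. Theorem~\ref{thm:lift} identifies $T(\bar\eta^{*n})=\TV(\LamP^{\otimes n},\LamQ^{\otimes n})$. It then remains to show $\TV(\LamP^{\otimes n},\LamQ^{\otimes n})\ge\TV(\bar P^{\otimes n},\bar Q^{\otimes n})$. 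This is data processing: the projection $(i,\omega)\mapsto\omega$ pushes $\LamP$ to $\bar P$ and $\LamQ$ to $\bar Q$. Applying it coordinatewise pushes $\LamP^{\otimes n}$ to $\bar P^{\otimes n}$ and likewise for $Q$, and TV cannot increase under a common measurable map.

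\emph{Zero masses.} If some $P_i(\omega)$ or $Q_i(\omega)$ vanish, replace $P_i$ by $(1-\epsilon)P_i+\epsilon U$ with $U$ uniform on $\Omega$, and similarly for $Q_i$. Both sides of the inequality are continuous in the finitely many masses, since TV between products on a finite set is a finite sum of absolute values of polynomials. The homogenized measures transform in the same way, because averaging commutes with mixing in $U$. Letting $\epsilon\to0$ gives the inequality.

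\emph{General measurable spaces.} Fix $\delta>0$. Choose a measurable set $E\subseteq\Omega^n$ with $\bar P^{\otimes n}(E)-\bar Q^{\otimes n}(E)>\TV(\bar{\vec P},\bar{\vec Q})-\delta$. Approximate $E$ by a finite union of measurable rectangles with sides in a finite partition $\pi$ of $\Omega$. This is possible in $L^1$ of the finite measure $\sum_i(P_i+Q_i)$ taken in each coordinate, since rectangles generate the product $\sigma$-algebra. The approximating set lies in the $\sigma$-algebra generated by $\pi^n$. Hence the TV between the finite quotient measures $\bar P_\pi^{\otimes n},\bar Q_\pi^{\otimes n}$ exceeds $\TV(\bar{\vec P},\bar{\vec Q})-2\delta$. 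The quotient commutes with averaging: $(\bar P)_\pi=\overline{P_\pi}$. The finite case applied to $(P_{i,\pi},Q_{i,\pi})$ then gives $\TV(\bigotimes P_{i,\pi},\bigotimes Q_{i,\pi})\ge c(\TV(\bar{\vec P},\bar{\vec Q})-2\delta)$. Data processing under the quotient map bounds the left side by $\TV(\vec P,\vec Q)$. Letting $\delta\to0$ finishes. This approximation is the only step needing care, because the same $\pi$ must work for all the relevant measures. Using the single dominating measure $\sum_i(P_i+Q_i)$ handles that.

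\emph{Multinomial identity \eqref{eq:mult}.} The count vector $N(\omega)=\#\{j:x_j=\omega\}$ is a sufficient statistic for any i.i.d.\ family. Indeed, $\bar P^{\otimes n}(x)=\prod_\omega\bar P(\omega)^{N(\omega)}$, so both $\bar P^{\otimes n}$ and $\bar Q^{\otimes n}$ are uniform on each type class. Therefore
\[
\tfrac12\sum_x\abs{\bar P^{\otimes n}(x)-\bar Q^{\otimes n}(x)}=\tfrac12\sum_N\binom{n}{N}\abs{\textstyle\prod\bar P^{N}-\prod\bar Q^{N}},
\]
which is exactly $\TV(\Mult(n,\bar P),\Mult(n,\bar Q))$.
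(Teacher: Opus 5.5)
Your proposal is correct and follows the paper's proof essentially step for step. You use the same chain Theorem~\ref{thm:encoding} $\to$ Theorem~\ref{thm:main-conv} $\to$ Theorem~\ref{thm:lift} $\to$ data processing under $(i,\omega)\mapsto\omega$, the same uniform-mixing perturbation to remove zero masses, and the same count-map argument for \eqref{eq:mult}. Your general-space step is the paper's Lemma~\ref{lem:rect} written inline, with the dominating measure $\sum_i(P_i+Q_i)$ in place of $\bar P^{\otimes n}+\bar Q^{\otimes n}$; in fact only the homogenized pair needs approximating, since data processing bounds the heterogeneous side for any partition.
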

\begin{proof}
We first assume that
$\Omega$ is finite and the $P_i, Q_i$
are strictly positive probability mass functions.

For each $i\in[n]$, define
$(P_i,Q_i)\mapsto\eta_i$
as in \eqref{eq:etai}.
Lemma~\ref{lem:admiss-closure}
implies
each $\eta_i$ is admissible and
Theorem~\ref{thm:encoding} that
$
\TV(\vec P,\vec Q)
=
T(\eta_1*\cdots *\eta_n)
.
$
Define probability mass functions
$\LamP,\LamQ$
on $[n]\times\Omega$ 
as in 
Theorem~\ref{thm:lift};
the latter implies that
$
T(\bar\eta^{*n})=\TV(\LamP^{\otimes n},\LamQ^{\otimes n})
$,
where
$
\bar\eta:=\frac1n\sum_{i=1}^n \eta_i
$.
Invoking Theorem~\ref{thm:main-conv},
we obtain
\beq
\TV(\LamP^{\otimes n},\LamQ^{\otimes n})
=
T(\bar\eta^{*n})
\le
C_0\,T(\eta_1*\cdots *\eta_n)
=
C_0\,\TV(\vec P,\vec Q),
\qquad
C_0<6.7129.
\eeq
Define the map
$\pi:[n]\times\Omega\to\Omega$
by $\pi(i,\omega)=\omega$.
We make two simple observations about $\pi$: first,
pushforwards satisfy
\[
\pi_{\#}\LamP=\sum_{i=1}^n \LamP(i,\cdot)=\frac1n\sum_{i=1}^n P_i=\bar P
\]
(and analogously for $Q$)
and secondly, they
induce
a Markov kernel
---
as does the 
product
map $
\pi^{\otimes n}
$.
Thus, the data processing inequality
\cite[Theorem 7.4]{Polyanskiy_Wu_2024}
applies:
\beq
\TV(\bar{\vec P},\bar{\vec Q})
&\le&
\TV(\LamP^{\otimes n},\LamQ^{\otimes n}).
\eeq
This completes the proof of
$
\TV(\bar{\vec P},\bar{\vec Q})
\le
C_0\,\TV(\vec P,\vec Q)
$
in the finite $\Omega$, strictly positive case.
The positivity assumption is removed via a standard continuity argument: one perturbs
an arbitrary $P_i$ (resp., $Q_i$)
via $P_i^{(\delta)}(\omega)=(1-\delta)P_i(\omega)+\frac{\delta}{|\Omega|}$
and takes $\delta\downarrow0$, noting 
that $\TV(\cdot,\cdot)$ is continuous in both arguments.

The extension to general measurable spaces 
is via a standard approximation argument,
spelled out in
Lemma~\ref{lem:rect} below:
For any probability measures $\mu,\nu$ on $(\Omega,\mathcal F)$ and any $n\ge 1$,
we have
$
\TV(\mu^{\otimes n},\nu^{\otimes n})
=
\sup_{\mathcal E}
\TV(\mu_{\mathcal E}^{\otimes n},\nu_{\mathcal E}^{\otimes n}),
$
where the supremum is over all finite $\mathcal F$-measurable partitions
$\mathcal E=\{E_1,\dots,E_m\}$ of $\Omega$, and
$\mu_{\mathcal E}(j)=\mu(E_j)$, $\nu_{\mathcal E}(j)=\nu(E_j)$.
Since 
the quotient map
$
\Omega\to[m]
$
induces a Markov kernel, we have
\beq
\TV(\vec P,\vec Q)
\;\ge\;
\sup_{\mathcal{E}}
\TV\!\left(
\bigotimes_{i=1}^n 
(P_i)_{\mathcal{E}}
,
\bigotimes_{i=1}^n (Q_i)_{\mathcal{E}}
\right)
\;\ge\;
c
\sup_{\mathcal{E}}
\TV\!\left(
\bar P_{\mathcal{E}}^{\otimes n}
,
\bar Q_{\mathcal{E}}^{\otimes n}
\right)
\;=\;
c
\TV\!\left(
\bar P^{\otimes n}
,
\bar Q^{\otimes n}
\right)
.
\eeq
Finally, \eqref{eq:mult} is a standard fact, which we prove in
Lemma~\ref{lem:mult} for completeness.
\end{proof}

\subsection*{Related work}

The most directly relevant prior
work is \cite{kontorovich2026tvhomogenization}, which proves 
$
\TV(\vec P,\vec Q)\ge c\,\TV(\bar{\vec P},\bar{\vec Q})
$
for the special case of 
$\Omega=\{0,1\}$
(i.e., products of Bernoullis)
and a worse constant ($c=0.0115$).
The proof techniques employed therein appear not to generalize to
general measurable spaces
and are quite distinct from the methods used here.

Regarding the convolution,
Roos obtained explicit total-variation bounds for heterogeneous convolution products on measurable Abelian groups and semigroups.
In particular,
\cite{Roos2010ClosenessConvolutions}
studies approximation by the 
$n$-fold convolution of the arithmetic mean, and 
\cite{Roos2017RefinedTV}
refines related nonasymptotic bounds in multivariate and compound Poisson approximation.

Conceptually, our construction fits in the classical
Hellinger--Bhattacharyya--Kakutani line of ideas in that it starts from the
overlap measure $\sqrt{PQ}$ and the log-likelihood ratio
\cite{Hellinger1909,Bhattacharyya1946,Kakutani1948,LeCamYang2000,Torgersen1991}.
The relevance of Kakutani's
result is structural:
it identifies Hellinger-type overlap as a canonical score for product
measures.
In \cite{Feng24Deterministically}, the total variation between
product measures is recovered from a one-dimensional distribution of the
likelihood ratio. Classically, 
\cite{Birge1983} studied non-i.i.d.\ testing through the Hellinger geometry
of product measures, replacing total variation by the tensorized Hellinger
metric.

A second recurring theme is the bounded transform
$
\frac{u-v}{u+v}
=
\tanh\!\left(\frac12\log\frac{u}{v}\right),
$
which has appeared in robust testing and Hellinger-based procedures
\cite{Huber1965,Baraud2011,BaraudBirge2018,Suresh2021}. In that literature, such
quantities serve as bounded surrogates for the log-likelihood ratio. Here the
same transform arises from an exact representation theorem and becomes the basic
analytic variable in the convolution inequality.
The representation
$\min\{u,v\}=\sqrt{uv}\exp\left(
-\frac12\left|\log\frac u v\right|
\right)$, reminiscent of our convolutional encoding \eqref{eq:etai}, was used in
\cite{kon25tens} to prove
$
\TV(\vec P,\vec Q)\ge 
c\min\{1,
\sqrt{
\sum_{i=1}^n\TV(P_i,Q_i)^2
}
\}
$. As discussed in \cite{kontorovich2026tvhomogenization}, the homogenized and the $\ell_2$ lower bounds are in general incomparable.

\section{Proofs}

\subsection{Theorems \ref{thm:encoding}
and
\ref{thm:lift}
}

\begin{proof}[Proof of Theorem \ref{thm:encoding}]
We begin with a single coordinate $i$:
\begin{align*}
T(\eta_{i})
&=
\frac12
\sum_{\omega\in\Omega}
\left|
\mathe^{\frac12
\log\frac{P_i(\omega)}{Q_i(\omega)}
}
-
\mathe^{-\frac12
\log\frac{P_i(\omega)}{Q_i(\omega)}
}
\right|
\sqrt{P_i(\omega)Q_i(\omega)}
\\
&=
\frac12
\sum_{\omega\in\Omega}
\left|
\sqrt{
\frac{P_i(\omega)}{Q_i(\omega)}
}
-
\sqrt{
\frac{Q_i(\omega)}{P_i(\omega)}
}
\right|
\sqrt{P_i(\omega)Q_i(\omega)}
\\
&=
\frac12
\sum_{\omega\in\Omega}\abs{P_i(\omega)-Q_i(\omega)}
=
\TV(P_i,Q_i).
\end{align*}
To 
prove 
the claim for products, 
let
$\vec\omega=(\omega_1,\dots,\omega_n)\in\Omega^n$
and write
$
L(\vec\omega):=\sum_{i=1}^n \frac12\log\frac{P_i(\omega_i)}{Q_i(\omega_i)}.
$
By definition of convolution, $\eta_1*\cdots *\eta_n$ is the pushforward of
$\bigotimes_{i=1}^n \eta_i$ under addition, and therefore
\[
\eta_1*\cdots *\eta_n
=
\sum_{\vec\omega}
\left(\prod_{i=1}^n \sqrt{P_i(\omega_i)Q_i(\omega_i)}\right)
\Dirac{L(\vec\omega)},
\]
where repeated atoms are combined. Hence
\begin{align*}
T(\eta_1*\cdots *\eta_n)
&=
\frac12
\sum_{\vec\omega}
\abs{\mathe^{L(\vec\omega)}-\mathe^{-L(\vec\omega)}}
\prod_{i=1}^n \sqrt{P_i(\omega_i)Q_i(\omega_i)}\\
&=
\frac12
\sum_{\vec\omega}
\abs{\prod_{i=1}^n P_i(\omega_i)-\prod_{i=1}^n Q_i(\omega_i)}\\
&=
\TV\!\left(\bigotimes_{i=1}^n P_i,\bigotimes_{i=1}^n Q_i\right).
\end{align*}
\end{proof}

\begin{proof}[Proof of Theorem \ref{thm:lift}]
By Theorem~\ref{thm:encoding}, the admissible encoding of the pair
$(\LamP,\LamQ)$ is
\[
\sum_{i=1}^n \sum_{\omega\in\Omega}
\sqrt{\LamP(i,\omega)\LamQ(i,\omega)}\,
\Dirac{\frac12\log\frac{\LamP(i,\omega)}{\LamQ(i,\omega)}}.
\]
Now
\[
\sqrt{\LamP(i,\omega)\LamQ(i,\omega)}
=
\sqrt{\frac1n P_i(\omega)\cdot \frac1n Q_i(\omega)}
=
\frac1n\sqrt{P_i(\omega)Q_i(\omega)},
\]
and
\[
\frac12\log\frac{\LamP(i,\omega)}{\LamQ(i,\omega)}
=
\frac12\log\frac{(1/n)P_i(\omega)}{(1/n)Q_i(\omega)}
=
\frac12\log\frac{P_i(\omega)}{Q_i(\omega)}.
\]
Therefore the encoding becomes
\[
\sum_{i=1}^n \sum_{\omega\in\Omega}
\frac1n\sqrt{P_i(\omega)Q_i(\omega)}\,
\Dirac{\frac12\log\frac{P_i(\omega)}{Q_i(\omega)}}
=
\frac1n\sum_{i=1}^n \eta_i
=
\bar\eta.
\]
The final claim follows from
Theorem~\ref{thm:encoding}
applied to the $n$-fold pair $(\LamP,\LamQ)$.
\end{proof}

\subsection{Proof of Theorem \ref{thm:main-conv}}

We restate the theorem in a way that facilitates computing explicit constants.
\begin{theorem}\label{thm:conv-w-const}
For $\varepsilon>0$, define
\[
\De{\varepsilon}
:=
\sqrt{
(1+6\varepsilon)\,
\frac{\sinh(2\varepsilon)-2\varepsilon}{(2\varepsilon)^2}
}
\]
and
\[
C(\varepsilon)
:=
\max\left\{
\frac{4\sqrt 2+\De{\varepsilon}}{1-\De{\varepsilon}},
\sqrt{\frac{1+\mathe^{-\varepsilon}}{1-\mathe^{-\varepsilon}}}
\right\}.
\]
Let
\[
C_0:=\inf\bigl\{C(\varepsilon):\ \varepsilon>0,\ \De{\varepsilon}<1\bigr\}.
\]
Then for every integer $n\ge 1$ and every finitely supported admissible family
$\eta_1,\dots,\eta_n$ on $\R$,
\[
T\!\left(\left(\frac1n\sum_{i=1}^n \eta_i\right)^{*n}\right)
\le
C_0\,T(\eta_1*\cdots *\eta_n).
\]
\end{theorem}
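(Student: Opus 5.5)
Write $\mu:=\eta_1*\cdots*\eta_n$ and $\bar\mu:=\bar\eta^{*n}$, with $\bar\eta=\frac1n\sum_i\eta_i$. The plan rests on one identity. For an admissible $\eta$ put $A(\eta)=\int\cosh x\,\eta(\dd x)=1$ and $B(\eta)=\int \mathe^{-|x|}\eta(\dd x)$. Since $\frac12|\mathe^x-\mathe^{-x}|=\cosh x-\mathe^{-|x|}$, we get $T(\eta)=1-B(\eta)$. Moreover, admissibility is preserved under convolution (the exponential moments multiply) and under averaging. So the claim is equivalent to
\[
1-B(\bar\mu)\le C_0\bigl(1-B(\mu)\bigr).
\]
A natural reparametrization is to view $\eta$ through the probability measure $\nu(\dd x)=\cosh x\,\eta(\dd x)$. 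The functional $B$ then becomes $\int \frac{\mathe^{-|x|}}{\cosh x}\,\dd\nu=\int\bigl(1-|\tanh x|\bigr)\dd\nu$. The condition $\int \sinh x\,\eta(\dd x)=0$ becomes $\int\tanh x\,\dd\nu=0$, which is the bounded transform mentioned in the introduction. Under convolution, these tilted measures behave like laws of sums of independent variables $X_i$, up to a $\cosh$ renormalization. This is awkward, so I would instead keep $\eta$ and use the tilt by $\mathe^{x}$, i.e.\ $P$-measure. Under $\mathe^{x}\eta_i$ the variable is the half log-likelihood ratio $L_i$ under $P_i$, and $T(\mu)=\E_{P}[(1-\mathe^{-2S})_+]$ with $S=\sum L_i$.

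I would split into two regimes using a threshold $\varepsilon$, which explains the max in $C(\varepsilon)$.

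\emph{Large-distance regime.} Suppose $T(\mu)$ is bounded below by something like $1-\mathe^{-\varepsilon}$. Then the trivial bound $T(\bar\mu)\le1$ suffices provided $C(\varepsilon)\,T(\mu)\ge1$. The second entry $\sqrt{(1+\mathe^{-\varepsilon})/(1-\mathe^{-\varepsilon})}$ suggests a Hellinger-type comparison is used instead. Writing $H=\int\eta=\prod_i\int\eta_i$ for the Bhattacharyya coefficient, the standard inequality $T\ge 1-H$, together with $T(\bar\mu)\le\sqrt{1-\bar H^{2}}$, $\bar H=(\frac1n\sum_i h_i)^n\ge\prod_i h_i$ by AM–GM, yields
\[
\frac{T(\bar\mu)}{T(\mu)}\le\frac{\sqrt{1-H^2}}{1-H}=\sqrt{\frac{1+H}{1-H}},
\]
which is at most the second term whenever $H\le \mathe^{-\varepsilon}$. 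This regime is thus handled cleanly by AM–GM on the masses.

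\emph{Small-distance regime, $H>\mathe^{-\varepsilon}$.} Here $\sum_i(1-h_i)\lesssim\varepsilon$, so every $\eta_i$ is close to $\delta_0$ in the weighted sense. I would compare both $T(\mu)$ and $T(\bar\mu)$ to a common Gaussian/second-moment proxy, namely the quadratic quantity $V=\sum_i\int \sinh^2$-type moments. In the heterogeneous case let $\Vhet=\sum_i \mathrm{Var}(L_i)$, and in the mean-field case let $\Vmf=n\,\mathrm{Var}(\bar L)$; mixing only increases variance, so $\Vmf\ge\Vhet$ up to the mean shift. The target is two-sided bounds of the form
\[
T(\mu)\ge c_1\sqrt{\Vhet}\,\bigl(1-\De{\varepsilon}\bigr)\quad\text{and}\quad T(\bar\mu)\le c_2\sqrt{\Vmf}+\De{\varepsilon}\sqrt{\cdot}.
\]
The constant $4\sqrt2$ is presumably a Khintchine/Paley–Zygmund-type constant for $\E|S|$ versus $\sqrt{\E S^2}$. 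The error $\De{\varepsilon}$ comes from controlling the deviation of $\frac12|\mathe^x-\mathe^{-x}|$ from $|x|$ via a Taylor remainder of the form $(\sinh 2\varepsilon-2\varepsilon)/(2\varepsilon)^2$, and the factor $(1+6\varepsilon)$ from the mean shift $\int x\,\eta\approx -\frac12\int x^2\eta$ forced by admissibility. One then rearranges to $T(\bar\mu)\le\frac{4\sqrt2+\Delta}{1-\Delta}T(\mu)$.

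The main obstacle is the small-distance regime. Specifically, one must lower-bound $T(\mu)$ by the $L^2$ size of the heterogeneous sum, uniformly over atoms that may be large but carry tiny mass. These atoms are not small in $x$, so the Taylor control must be done relative to $H$ rather than to the support. I would first attempt to truncate at $|x|\le\varepsilon$, charging the rest to $1-H$, and then apply a symmetrization/Khintchine argument to the truncated part.
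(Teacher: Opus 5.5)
Your large-distance regime is correct and is essentially the paper's Case~II. It uses $T(\mu)\ge 1-H$ with $H=\prod_i\eta_i(\R)$, the bound $T(\bar\mu)\le\sqrt{1-\bar H^2}$, and $\bar H\ge H$ by AM--GM. The paper splits on $\alpha=\sum_i(1-\eta_i(\R))$ instead, but since $-\log H\ge\alpha$, your small regime $H>\mathe^{-\varepsilon}$ forces $\alpha<\varepsilon$, so the two splits are compatible.

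The gap is the small-distance regime, which is the heart of the theorem, and there your proposed proxy fails. A lower bound $T(\mu)\ge c_1\sqrt{\sum_i\mathrm{Var}(L_i)}\,(1-\De{\varepsilon})$ with a deterministic variance is false, with or without truncation. Take $\eta_1=(1-p)\delta_0+\frac{p}{2\cosh a}(\delta_a+\delta_{-a})$, which is admissible with mass defect $p(1-\operatorname{sech}a)\to0$, and $\eta_i=\delta_0$ for $i\ge2$. Then $T(\mu)=p\tanh a$, while $\mathrm{Var}(L_1)=pa^2(1-p\tanh^2a)$. The ratio is of order $\sqrt p\to0$, even for $a\le\varepsilon$. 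For sparse signals, $T$ scales like the \emph{random} square function $\E(\sum_iU_i^2)^{1/2}$, not like $(\sum_i\E U_i^2)^{1/2}$.

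Your inequality $\bar V\ge V$ (mixing increases variance) also points in the useless direction for bounding $T(\bar\mu)$ from above. Finally, the constants $1+6\varepsilon$ and $\De{\varepsilon}$ do not come from a mean shift or from a Taylor remainder of $|\sinh x|$ versus $|x|$.

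What is missing is the structure the paper builds on the $\cosh$-tilt you set aside as awkward. Let $X_i\sim\cosh(x)\,\eta_i(\dd x)$ and $U_i=\tanh X_i$. The identity $1\pm\tanh x=\mathe^{\pm x}/\cosh x$ then gives exactly $T(\eta_1*\cdots*\eta_n)=\E|\Psi(U_1,\dots,U_n)|$ with $\Psi=\frac12\bigl(\prod_i(1+U_i)-\prod_i(1-U_i)\bigr)$. This parametrization buys three things:
\begin{itemize}
\item the $U_i$ are centered with $|U_i|\le1$, so large atoms need no truncation;
\item homogenization becomes i.i.d.\ sampling from the averaged law;
\item second moments are shared: $n\E\bar U^2=\sum_i\E U_i^2=:\nu\in[\alpha,2\alpha]$.
\end{itemize}

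From there the paper's steps are as follows.
\begin{itemize}
\item Write $\Psi=S+R$. Orthogonality of square-free monomials gives $\E R^2\le\sinh\nu-\nu$, and this bound holds for both families since $\nu$ is shared.
\item H\"older against $\E S^4\le\nu+3\nu^2$ gives $\E|S|\ge\nu/\sqrt{1+3\nu}$. This, together with the previous bound, is where $1+6\varepsilon$ and $\De{\varepsilon}$ come from.
\item Symmetrization plus Khintchine gives $\frac1{2\sqrt2}\E\sqrt V\le\E|S|\le2\E\sqrt V$ with $V=\sum_iU_i^2$, whence $4\sqrt2=2\cdot2\sqrt2$.
\item The indispensable new ingredient is the Laplace-transform ordering $\E\sqrt{\bar V}\le\E\sqrt V$ for $\bar V=\sum_i\bar U_i^2$. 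It follows from
\[
\E\mathe^{-\lambda\bar V}=\Bigl(\frac1n\sum_i\E\mathe^{-\lambda U_i^2}\Bigr)^n\ge\prod_i\E\mathe^{-\lambda U_i^2}=\E\mathe^{-\lambda V}
\]
together with $\sqrt x=\frac1{2\sqrt\pi}\int_0^\infty(1-\mathe^{-\lambda x})\lambda^{-3/2}\dd\lambda$.
\end{itemize}
Without some comparison of the homogenized and heterogeneous square functions in this direction, your small regime does not close.
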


\subsubsection*{High-level proof outline}

The proof has two conceptual steps and two analytic regimes. The structural step,
carried out in Lemma~\ref{lem:score}, rewrites the functional $T$ as the
expectation of an explicit multilinear form $\Psi$ of independent centered
bounded random variables. Under this representation, replacing a heterogeneous
family $\eta_1,\dots,\eta_n$ by its arithmetic mean $\bar\eta$ corresponds
exactly to replacing the heterogeneous variables by i.i.d.\ variables with the
averaged one-coordinate law.

The analytic step is to compare the heterogeneous and homogenized evaluations of
$\Psi$. This comparison is governed by the mass defect
\[
\alpha:=\sum_{i=1}^n \bigl(1-\eta_i(\R)\bigr).
\]
When $\alpha$ is small, the multilinear form $\Psi$ is well approximated by its
linear part, and the problem reduces to square-function estimates together with
a Laplace-transform ordering; this is the content of
Lemmas~\ref{lem:signal}--\ref{lem:laplaceorder}. When $\alpha$ is large, the
functional $T$ is controlled directly by the total mass of the admissible
measures; this is captured by Lemma~\ref{lem:masscontrol}. We proceed to lay down the
ingredients needed for these two regimes.

\begin{lemma}[Multilinear score representation]\label{lem:score}
Let $\eta_1,\dots,\eta_n$ be finitely supported admissible measures on $\R$.
For each $i$, define a probability measure
\[
M_i(\dd x):=\cosh(x)\,\eta_i(\dd x),
\]
let $X_i\sim M_i$ independently, and set
\[
U_i:=\tanh(X_i)\in[-1,1].
\]
Define
\[
\Psi(y_1,\dots,y_n):=\frac12\left(\prod_{i=1}^n(1+y_i)-\prod_{i=1}^n(1-y_i)\right).
\]
Then $\E U_i=0$ for every $i$, and
\[
T(\eta_1*\cdots *\eta_n)=\E\big|\Psi(U_1,\dots,U_n)\big|.
\]
Moreover, if
$(\mathrm{a})$
$\bar\eta:=\frac1n\sum_{i=1}^n \eta_i$,
$(\mathrm{b})$
\[
\bar M(\dd x):=\cosh(x)\,\bar\eta(\dd x)=\frac1n\sum_{i=1}^n M_i(\dd x),
\]
$(\mathrm{c})$
$\bar X\sim\bar M$, 
$(\mathrm{d})$
$\bar U:=\tanh(\bar X)$, 
and 
$(\mathrm{e})$
$\bar U_1,\dots,\bar U_n$ are i.i.d.\ copies of $\bar U$, then
\[
T(\bar\eta^{*n})=\E\big|\Psi(\bar U_1,\dots,\bar U_n)\big|,
\]
and the law of $\bar U$ is the arithmetic average of the laws of
$U_1,\dots,U_n$.
\end{lemma}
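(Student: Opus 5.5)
The plan is to verify each claim by a direct computation from the admissibility conditions \eqref{eq:admiss}, with no analytic input beyond them.

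\emph{Step 1: the $M_i$ and the $U_i$.} Averaging the two identities in \eqref{eq:admiss} gives $\int\cosh(x)\,\eta_i(\dd x)=1$. Hence $M_i$ is a probability measure, finitely supported because $\eta_i$ is. Subtracting the two identities gives $\int\sinh(x)\,\eta_i(\dd x)=0$, and therefore
\[
\E U_i=\int\tanh(x)\cosh(x)\,\eta_i(\dd x)=\int\sinh(x)\,\eta_i(\dd x)=0 .
\]

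\emph{Step 2: rewriting $T$ of a convolution.} The convolution $\eta_1*\cdots*\eta_n$ is the pushforward of $\bigotimes_i\eta_i$ under $(x_1,\dots,x_n)\mapsto\sum_i x_i$. Writing $s=\sum_i x_i$, this gives
\[
T(\eta_1*\cdots*\eta_n)=\frac12\int\abs{\mathe^{s}-\mathe^{-s}}\prod_i\eta_i(\dd x_i).
\]
Next I multiply and divide by $\prod_i\cosh(x_i)$, which is strictly positive. This turns the integral into an expectation over independent $X_i\sim M_i$, namely
\[
\E\,\frac12\abs{\prod_i\frac{\mathe^{X_i}}{\cosh X_i}-\prod_i\frac{\mathe^{-X_i}}{\cosh X_i}} .
\]
Finally I use the identities
\[
\frac{\mathe^{x}}{\cosh x}=1+\tanh x,\qquad \frac{\mathe^{-x}}{\cosh x}=1-\tanh x,
\]
which follow from $\mathe^{\pm x}=\cosh x\pm\sinh x$. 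The integrand then becomes $\abs{\Psi(U_1,\dots,U_n)}$, which proves $T(\eta_1*\cdots*\eta_n)=\E\abs{\Psi(U_1,\dots,U_n)}$.

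\emph{Step 3: the homogenized statement.} Admissibility is preserved under averaging because \eqref{eq:admiss} is linear in $\eta$. So $\bar\eta$ is admissible and finitely supported, and Step~2 applies to the family $(\bar\eta,\dots,\bar\eta)$, giving $T(\bar\eta^{*n})=\E\abs{\Psi(\bar U_1,\dots,\bar U_n)}$ with the $\bar U_i$ i.i.d. The identity $\bar M=\frac1n\sum_i M_i$ holds by linearity of $\eta\mapsto\cosh\cdot\,\eta$. Pushing forward under $\tanh$ is also linear in the measure, so the law of $\bar U$ is the average of the laws of the $U_i$.

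There is no genuine obstacle here. The only point needing care is in Step~2: the change of measure from $\eta_i$ to $M_i$ must be applied jointly, as a product density, before the integrand is expressed through the $\tanh(X_i)$. This is what allows the exponential of the sum $s$ to factor as a product over the coordinates.
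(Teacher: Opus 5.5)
Your proposal is correct and follows essentially the same route as the paper. You normalize via $\int\cosh x\,\eta_i(\dd x)=1$, get centering from $\int\sinh x\,\eta_i(\dd x)=0$, change measure from $\prod_i\eta_i$ to $\prod_i M_i$ using $1\pm\tanh x=\mathe^{\pm x}/\cosh x$, and obtain the homogenized claims by linearity. Your explicit check that $\bar\eta$ is admissible is a detail the paper leaves implicit in its ``same computation'' step.
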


\begin{proof}
Since $\eta_i$ is admissible,
\[
M_i(\R)=\int \cosh(x)\,\eta_i(\dd x)
=\frac12\int (\mathe^x+\mathe^{-x})\,\eta_i(\dd x)=1,
\]
so $M_i$ is a probability measure. Also,
\[
\E U_i
=
\int \tanh(x)\,M_i(\dd x)
=
\int \sinh(x)\,\eta_i(\dd x)
=
\frac12\int (\mathe^x-\mathe^{-x})\,\eta_i(\dd x)=0.
\]
Now use
\[
1\pm\tanh x=\frac{\mathe^{\pm x}}{\cosh x}.
\]
Since
\[
\eta_i(\dd x)=\operatorname{sech}(x)M_i(\dd x)=\cosh(x)^{-1}M_i(\dd x),
\]
we obtain
\begin{align*}
T(\eta_1*\cdots *\eta_n)
&=
\frac12\int_{\R^n}
\abs{\mathe^{x_1+\cdots+x_n}-\mathe^{-(x_1+\cdots+x_n)}}
\prod_{i=1}^n \eta_i(\dd x_i)\\
&=
\frac12\int_{\R^n}
\abs{
\prod_{i=1}^n \frac{\mathe^{x_i}}{\cosh x_i}
-
\prod_{i=1}^n \frac{\mathe^{-x_i}}{\cosh x_i}
}
\prod_{i=1}^n M_i(\dd x_i)\\
&=
\frac12\int_{\R^n}
\abs{
\prod_{i=1}^n (1+\tanh x_i)
-
\prod_{i=1}^n (1-\tanh x_i)
}
\prod_{i=1}^n M_i(\dd x_i)\\
&=
\E\big|\Psi(U_1,\dots,U_n)\big|.
\end{align*}
The same computation with $\bar\eta$ in place of $(\eta_i)$ gives
\[
T(\bar\eta^{*n})=\E\big|\Psi(\bar U_1,\dots,\bar U_n)\big|.
\]
Finally, for every Borel set $A\subseteq[-1,1]$,
\[
\mathbb P(\bar U\in A)
=
\bar M\bigl(\set{x:\tanh x\in A}\bigr)
=
\frac1n\sum_{i=1}^n M_i\bigl(\set{x:\tanh x\in A}\bigr),
\]
which says exactly that the law of $\bar U$ is the arithmetic average of the
laws of the $U_i$.
\end{proof}

\begin{lemma}[Mass defect and quadratic signal size]\label{lem:signal}
With the notation of Lemma~\ref{lem:score}, define
\[
m_i:=\eta_i(\R),
\qquad
\varepsilon_i:=1-m_i,
\qquad
\alpha:=\sum_{i=1}^n \varepsilon_i,
\qquad
\nu:=\sum_{i=1}^n \E U_i^2.
\]
Then $0\le m_i\le 1$ and
\[
\alpha\le \nu\le 2\alpha.
\]
\end{lemma}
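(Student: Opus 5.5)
Everything reduces to a single coordinate, because $\alpha$ and $\nu$ are both sums over $i$. It therefore suffices to show, for each admissible $\eta_i$, that $0\le m_i\le1$ and
\[
\varepsilon_i\le \E U_i^2\le 2\varepsilon_i .
\]
Summing over $i$ then gives $\alpha\le\nu\le2\alpha$.

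The plan is to rewrite each quantity as an integral against $\eta_i$. By Lemma~\ref{lem:score}, $M_i=\cosh(x)\,\eta_i(\dd x)$ is a probability measure. Hence
\[
m_i=\int \operatorname{sech}(x)\,M_i(\dd x)\in[0,1],
\]
since $0<\operatorname{sech}\le1$. This gives the first claim immediately. For the second, I will use $1=\int\cosh(x)\,\eta_i(\dd x)$ (admissibility) to write
\[
\varepsilon_i=1-m_i=\int(\cosh x-1)\,\eta_i(\dd x).
\]
Likewise,
\[
\E U_i^2=\int\tanh^2(x)\cosh(x)\,\eta_i(\dd x)=\int\frac{\sinh^2x}{\cosh x}\,\eta_i(\dd x).
\]
Both sides are now integrals of nonnegative functions against the same positive measure $\eta_i$. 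So it suffices to prove the pointwise inequality
\[
\cosh x-1\;\le\;\frac{\sinh^2 x}{\cosh x}\;\le\;2(\cosh x-1)\qquad(x\in\R).
\]

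For the pointwise inequality, set $c=\cosh x\ge1$, so that $\sinh^2x=c^2-1$. The middle quantity becomes $(c^2-1)/c=(c-1)(c+1)/c$. Dividing by $c-1$ (the case $c=1$ being trivial, since all three terms vanish), the claim reduces to
\[
1\le \frac{c+1}{c}\le 2,
\]
that is, $1\le 1+1/c\le2$. This holds because $c\ge1$.

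No step is a genuine obstacle. The only point requiring care is remembering to use the admissibility normalization $\int\cosh\,\dd\eta_i=1$ to express the defect $\varepsilon_i$ as an integral. Once that is done, the rest is the scalar inequality above.
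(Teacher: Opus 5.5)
Your proof is correct and is essentially the paper's argument: your pointwise inequality $\cosh x-1\le \sinh^2x/\cosh x\le 2(\cosh x-1)$, integrated against $\eta_i$, is the paper's inequality $\tfrac12u^2\le 1-\sqrt{1-u^2}\le u^2$ at $u=\tanh x$, multiplied by the density $\cosh x=\dd M_i/\dd\eta_i$; the paper instead integrates against $M_i$. The only cosmetic difference is that you verify the scalar inequality via $c=\cosh x$ and $1\le 1+1/c\le 2$, whereas the paper does it by squaring.
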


\begin{proof}
Because $\cosh x\ge 1$ and $M_i(\R)=1$,
\[
m_i=\eta_i(\R)\le \int \cosh(x)\,\eta_i(\dd x)=1.
\]
Also,
\[
m_i
=
\int \operatorname{sech}(x)\,M_i(\dd x)
=
\E\sqrt{1-U_i^2},
\]
because $U_i=\tanh(X_i)$ and $\sqrt{1-\tanh^2 x}=\operatorname{sech}x$.
Therefore
\[
\varepsilon_i=\E\left[1-\sqrt{1-U_i^2}\right].
\]
For every $u\in[-1,1]$ one has
\[
\frac12u^2\le 1-\sqrt{1-u^2}\le u^2.
\]
Indeed, the right inequality is equivalent to
\[
\sqrt{1-u^2}\ge 1-u^2,
\]
which is obvious, and the left inequality is equivalent to
\[
\sqrt{1-u^2}\le 1-\frac12u^2.
\]
Since $1-\frac12u^2\ge \frac12>0$, we may square both sides, obtaining
\[
1-u^2\le 1-u^2+\frac14u^4.
\]
Taking expectation and summing over $i$ gives
\[
\frac12\nu\le \alpha\le \nu,
\]
which proves the claim.
\end{proof}

\begin{lemma}[Linearization of $\Psi$]\label{lem:linearization}
Let $Y_1,\dots,Y_n$ be independent centered random variables taking values in
$[-1,1]$, and define
\[
\rho:=\sum_{i=1}^n \E Y_i^2,
\qquad
S:=\sum_{i=1}^n Y_i,
\qquad
\Psi(Y_1,\dots,Y_n)=S+R.
\]
Then
\begin{itemize}
\item[$\mathrm{(a)}$] 
\[
\E R^2\le \sinh(\rho)-\rho,
\qquad
\E|R|\le \sqrt{\sinh(\rho)-\rho},
\]
\item[$\mathrm{(b)}$]
\[
\E|S|\ge \frac{\rho}{\sqrt{1+3\rho}},
\]
\item[$\mathrm{(c)}$]
$\Dr{\rho}$, defined by $\De{\rho}=\Dr{2\rho}$, i.e.,
\[
\Dr{\rho}:=
\begin{cases}
\sqrt{\dfrac{(1+3\rho)(\sinh \rho-\rho)}{\rho^2}}, & \rho>0,\\
0, & \rho=0,
\end{cases}
\]
is increasing on $[0,\infty)$, and
\[
(1-\Dr{\rho})\,\E|S|
\le
\E\big|\Psi(Y_1,\dots,Y_n)\big|
\le
(1+\Dr{\rho})\,\E|S|.
\]
\end{itemize}
\end{lemma}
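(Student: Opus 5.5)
Expanding $\Psi$ into its multilinear (Walsh) terms is the backbone of the plan. Since $\prod(1+y_i)=\sum_{A\subseteq[n]}\prod_{i\in A}y_i$ and $\prod(1-y_i)=\sum_A(-1)^{|A|}\prod_{i\in A}y_i$, we get
\[
\Psi(y)=\sum_{A:\,|A|\text{ odd}}\ \prod_{i\in A}y_i ,
\]
so $S$ is the $|A|=1$ part and $R=\sum_{|A|\text{ odd},\,|A|\ge3}\prod_{i\in A}Y_i$.

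\textbf{Part (a).} Because the $Y_i$ are independent and centered, the monomials $\prod_{i\in A}Y_i$ are orthogonal in $L^2$, with $\E\prod_{i\in A}Y_i^2=\prod_{i\in A}\sigma_i^2$, where $\sigma_i^2:=\E Y_i^2$. Hence
\[
\E R^2=\sum_{|A|\text{ odd},\,|A|\ge3}\prod_{i\in A}\sigma_i^2
\le\sum_{k\ \text{odd},\,k\ge3}\frac{\rho^k}{k!}=\sinh\rho-\rho ,
\]
using $e_k(\sigma^2)\le \rho^k/k!$ (the elementary symmetric polynomial is dominated by the multinomial expansion of $\rho^k$). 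Cauchy--Schwarz then gives $\E|R|\le\sqrt{\E R^2}$.

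\textbf{Part (b).} I will use the standard Hölder interpolation $\E S^2\le(\E|S|)^{2/3}(\E S^4)^{1/3}$, i.e.\ $\E|S|\ge(\E S^2)^{3/2}/(\E S^4)^{1/2}$. Here $\E S^2=\rho$, and
\[
\E S^4=\sum_i\E Y_i^4+3\sum_{i\ne j}\sigma_i^2\sigma_j^2\le\sum_i\sigma_i^2+3\rho^2=\rho+3\rho^2,
\]
using $|Y_i|\le1$. This yields $\E|S|\ge\rho^{3/2}/\sqrt{\rho(1+3\rho)}=\rho/\sqrt{1+3\rho}$.

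\textbf{Part (c).} By the triangle inequality, $\bigl|\E|\Psi|-\E|S|\bigr|\le\E|R|$. Combining (a) and (b),
\[
\E|R|\le\sqrt{\sinh\rho-\rho}=\Dr{\rho}\cdot\frac{\rho}{\sqrt{1+3\rho}}\le\Dr{\rho}\,\E|S| ,
\]
which gives both inequalities. The case $\rho=0$ is trivial, since then all $Y_i=0$ almost surely.

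Monotonicity of $\Dr{\rho}$ is the main obstacle, but it is elementary. Write $\Dr{\rho}^2=(1+3\rho)\,g(\rho)$ with
\[
g(\rho)=\frac{\sinh\rho-\rho}{\rho^2}=\sum_{k\ge1}\frac{\rho^{2k-1}}{(2k+1)!}.
\]
This is a power series with nonnegative coefficients, hence increasing on $[0,\infty)$. Since $1+3\rho$ is positive and increasing, the product is increasing, and so is its square root.
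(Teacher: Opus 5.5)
Your proposal is correct and follows the paper's proof essentially step for step. For (a) you use the odd-subset expansion, $L^2$-orthogonality of the monomials and $e_k\le\rho^k/k!$; for (b) the Hölder interpolation with $\E S^4\le\rho+3\rho^2$; and for (c) the triangle inequality plus a nonnegative power-series argument for monotonicity. The only difference is cosmetic: you get monotonicity from a product of two nonnegative increasing factors rather than expanding the product series. You should also note, as the paper does, that dividing by $\E S^4$ in (b) requires $\rho>0$, the case $\rho=0$ being trivial.
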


\begin{proof}
Expanding the products gives
\[
\Psi(y_1,\dots,y_n)
=
\sum_{\substack{I\subseteq[n]\\ |I|\text{ odd}}}\prod_{i\in I} y_i.
\]
Therefore
\[
R=
\sum_{\substack{ I \subseteq[n]\\ | I |\ge 3,\ | I |\text{ odd}}}
\prod_{i\in I } Y_i.
\]
If $ I \neq  J$, 
the existence of an
$i\in  I \triangle  J$, together with independence
and centering,
implies
\[
\E\left[\prod_{i\in I }Y_i\prod_{j\in J}Y_j\right]=0.
\]
Thus distinct square-free monomials are orthogonal in $L^2$, and hence
\[
\E R^2
=
\sum_{\substack{ I \subseteq[n]\\ | I |\ge 3,\ | I |\text{ odd}}}
\prod_{i\in I }\E Y_i^2.
\]
Writing $a_i:=\E Y_i^2\ge 0$, we have $\sum_i a_i=\rho$, and for each $k\ge 1$,
\[
\sum_{1\le i_1<\cdots<i_k\le n}\prod_{\ell=1}^k a_{i_\ell}
\le \frac{\rho^k}{k!}.
\]
Therefore
\[
\E R^2
\le
\sum_{\substack{k\ge 3\\ k\text{ odd}}}\frac{\rho^k}{k!}
=
\sinh(\rho)-\rho.
\]
This yields
\[
\E|R|\le (\E R^2)^{1/2}\le \sqrt{\sinh(\rho)-\rho},
\]
proving $\mathrm{(a)}$.
Next,
\[
\E S^4
=
\sum_{i=1}^n \E Y_i^4
+
6\sum_{1\le i<j\le n} (\E Y_i^2)(\E Y_j^2).
\]
Since $\abs{Y_i}\le 1$, we have $Y_i^4\le Y_i^2$, and so
\[
\E S^4\le \rho+3\rho^2.
\]
Also, by H\"older's inequality,
\[
\E S^2
=
\E\bigl(\abs{S}^{2/3}\abs{S}^{4/3}\bigr)
\le
(\E\abs{S})^{2/3}(\E\abs{S}^4)^{1/3}
=
(\E\abs{S})^{2/3}(\E S^4)^{1/3}.
\]
If $\rho=0$, then $Y_i=0$ almost surely for every $i$, hence $S=0$ almost
surely, and the claimed lower bound for $\E\abs{S}$ is trivial. Assume now that
$\rho>0$. Since $\E S^2=\rho$, we obtain
\[
\rho\le (\E\abs{S})^{2/3}(\rho+3\rho^2)^{1/3},
\]
and therefore
\[
\E\abs{S}\ge \frac{\rho^{3/2}}{\sqrt{\rho+3\rho^2}}
=
\frac{\rho}{\sqrt{1+3\rho}},
\]
proving $\mathrm{(b)}$.
Together, (a) and (b) imply
\[
\frac{\E\abs{R}}{\E\abs{S}}
\le
\sqrt{\frac{(1+3\rho)(\sinh \rho-\rho)}{\rho^2}}
=
\Dr{\rho}
\qquad (\rho>0),
\]
and this is also trivial when $\rho=0$. Hence
\[
\E\abs{\Psi}
=
\E\abs{S+R}
\ge
\E\abs{S}-\E\abs{R}
\ge
(1-\Dr{\rho})\E\abs{S},
\]
and similarly
\[
\E\abs{\Psi}
\le
\E\abs{S}+\E\abs{R}
\le
(1+\Dr{\rho})\E\abs{S}.
\]
Finally, for $\rho>0$,
\[
\Dr{\rho}^2
=
(1+3\rho)\frac{\sinh \rho-\rho}{\rho^2}
=
(1+3\rho)\sum_{j=1}^{\infty}\frac{\rho^{2j-1}}{(2j+1)!},
\]
whose power-series coefficients are all nonnegative. Therefore $\Dr{\cdot}$ is
increasing on $(0,\infty)$, and since $\Dr{0}=0$, it is increasing on
$[0,\infty)$ as well.
\end{proof}

\begin{lemma}[Khintchine-type estimate]\label{lem:squarefunction}
Let $Y_1,\dots,Y_n$ be independent centered square-integrable real random
variables, and define
\[
\SY:=\sum_{i=1}^n Y_i,
\qquad
\VY:=\sum_{i=1}^n Y_i^2.
\]
Then
\[
\frac{1}{2\sqrt 2}\,\E\sqrt{\VY}
\le
\E\abs{\SY}
\le
2\,\E\sqrt{\VY}.
\]
\end{lemma}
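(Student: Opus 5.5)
We will prove the two inequalities by symmetrization. This reduces the problem to Rademacher sums with fixed coefficients, where Khintchine's inequality with optimal constants can be applied conditionally. Let $Y_1',\dots,Y_n'$ be an independent copy of $(Y_i)$. Let $\varepsilon_1,\dots,\varepsilon_n$ be i.i.d.\ Rademacher signs independent of everything. Put $Z_i:=Y_i-Y_i'$. Each $Z_i$ is symmetric, so $(Z_i)\overset{d}{=}(\varepsilon_iZ_i)$. We use the classical symmetrization bounds
\[
\tfrac12\,\E\Bigl|\sum_i \varepsilon_i Y_i\Bigr|\le \E|\SY|\le 2\,\E\Bigl|\sum_i\varepsilon_iY_i\Bigr|.
\]
Both follow from Jensen and centering. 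For the upper bound, $\E|\SY|=\E|\SY-\E'\SY'|\le\E|\sum_i Z_i|=\E|\sum_i\varepsilon_iZ_i|\le 2\E|\sum_i\varepsilon_iY_i|$ by the triangle inequality. For the lower bound, $\E|\sum_i\varepsilon_iY_i|\le \E|\sum_i\varepsilon_i Z_i|$: condition on $(\varepsilon_i,Y_i)$, use $\E Y_i'=0$ and Jensen. Then $\E|\sum_i \varepsilon_iZ_i|=\E|\sum_i Z_i|\le 2\E|\SY|$. Square-integrability ensures all these expectations are finite, and it justifies Fubini.

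Next, condition on $(Y_i)$ and apply Khintchine's inequality in $L^1$ with Szarek's sharp constant. For fixed reals $a_i$,
\[
\tfrac1{\sqrt2}\Bigl(\sum_i a_i^2\Bigr)^{1/2}\le \E\Bigl|\sum_i\varepsilon_ia_i\Bigr|\le\Bigl(\sum_i a_i^2\Bigr)^{1/2}.
\]
The upper bound here is just Cauchy--Schwarz. Taking expectation over $(Y_i)$ gives $\frac1{\sqrt2}\E\sqrt{\VY}\le\E|\sum_i\varepsilon_iY_i|\le\E\sqrt{\VY}$. Combined with the symmetrization bounds, this yields
\[
\frac{1}{2\sqrt2}\,\E\sqrt{\VY}\le\E|\SY|\le 2\,\E\sqrt{\VY},
\]
which are exactly the constants claimed.

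The main obstacle is the lower Khintchine constant $1/\sqrt2$. Szarek's theorem is deep, but we will cite it rather than reprove it. If a self-contained argument were preferred, the Hölder interpolation already used in Lemma~\ref{lem:linearization}(b) gives $\E|\sum\varepsilon_ia_i|\ge \|a\|_2/\sqrt3$. Indeed, $\E(\sum\varepsilon_ia_i)^4\le3\|a\|_2^4$, and the same $L^2$--$L^1$--$L^4$ Hölder step applies. That route yields only $\frac1{2\sqrt3}$. So to match the stated constant $\frac1{2\sqrt2}$, we invoke Szarek's sharp constant. Everything else is routine measurability and Fubini bookkeeping.
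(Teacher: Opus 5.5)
Your proposal is correct and follows the paper's proof essentially step for step. The paper also symmetrizes to get $\frac12\E|\sum_i\varepsilon_iY_i|\le\E|\SY|\le 2\E|\sum_i\varepsilon_iY_i|$, then applies the $p=1$ Khintchine inequality with constants $1/\sqrt2$ and $1$ conditionally on $(Y_i)$; your side remark that the self-contained H\"older route only gives $\frac{1}{2\sqrt3}$ is accurate but not needed.
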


\begin{proof}
Let $Y_1',\dots,Y_n'$ be an independent copy of $Y_1,\dots,Y_n$, and let
$\varepsilon_1,\dots,\varepsilon_n$ be independent Rademacher signs,
independent of everything else. Set
\[
\RY:=\sum_{i=1}^n \varepsilon_i Y_i.
\]
As in the usual symmetrization argument,
\[
\E\abs{\SY}
=
\E\left|\E'\sum_{i=1}^n (Y_i-Y_i')\right|
\le
\E\left|\sum_{i=1}^n (Y_i-Y_i')\right|
=
\E\left|\sum_{i=1}^n \varepsilon_i(Y_i-Y_i')\right|
\le
2\,\E\abs{\RY},
\]
while
\[
\E\abs{\RY}
=
\E\left|\E'\sum_{i=1}^n \varepsilon_i(Y_i-Y_i')\right|
\le
\E\left|\sum_{i=1}^n \varepsilon_i(Y_i-Y_i')\right|
=
\E\left|\sum_{i=1}^n (Y_i-Y_i')\right|
\le
2\,\E\abs{\SY}.
\]
Hence
\beqn
\label{eq:sym}
\frac12\,\E\abs{\RY}\le \E\abs{\SY}\le 2\,\E\abs{\RY}.
\eeqn
Conditioning on $Y=(Y_1,\dots,Y_n)$ and invoking the $p=1$ Khintchine inequality,
\[
\frac1{\sqrt2}\left(\sum_{i=1}^n Y_i^2\right)^{1/2}
\le
\E_{\varepsilon}\abs{\RY}
\le
\left(\sum_{i=1}^n Y_i^2\right)^{1/2};
\]
hence,
\[
\frac1{\sqrt2}\sqrt{\VY}\le \E_{\varepsilon}\abs{\RY}\le \sqrt{\VY}.
\]
Taking expectations over $Y$ gives
\[
\frac1{\sqrt2}\,\E\sqrt{\VY}\le \E\abs{\RY}\le \E\sqrt{\VY}.
\]
Combining this with \eqref{eq:sym} proves the claim.
\end{proof}

\begin{lemma}[Laplace-transform ordering]\label{lem:laplaceorder}
With the notation of Lemma~\ref{lem:score}, define
\[
\Vhet:=\sum_{i=1}^n U_i^2,
\qquad
\Vmf:=\sum_{i=1}^n \bar U_i^2.
\]
Then
\[
\E\sqrt{\Vmf}\le \E\sqrt{\Vhet}.
\]
\end{lemma}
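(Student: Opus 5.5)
We use the subordination identity
\[
\sqrt{v}=\frac{1}{2\sqrt\pi}\int_0^\infty \frac{1-\mathe^{-tv}}{t^{3/2}}\,\dd t,\qquad v\ge 0,
\]
which expresses $\sqrt{\cdot}$ as a mixture of the functions $v\mapsto 1-\mathe^{-tv}$ with nonnegative weights. By Tonelli,
\[
\E\sqrt{\Vhet}=\frac{1}{2\sqrt\pi}\int_0^\infty \frac{1-\E\mathe^{-t\Vhet}}{t^{3/2}}\,\dd t,
\]
and likewise for $\Vmf$. The claim therefore reduces to a pointwise ordering of Laplace transforms:
\[
\E\mathe^{-t\Vmf}\ \ge\ \E\mathe^{-t\Vhet}\qquad\text{for every } t\ge 0 .
\]

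To prove this ordering we use independence. It gives
\[
\E\mathe^{-t\Vhet}=\prod_{i=1}^n \phi_i(t),\qquad \phi_i(t):=\E\mathe^{-tU_i^2}\in(0,1].
\]
Since $\bar U_1,\dots,\bar U_n$ are i.i.d., we also have
\[
\E\mathe^{-t\Vmf}=\bigl(\E\mathe^{-t\bar U^2}\bigr)^n .
\]
By Lemma~\ref{lem:score}, the law of $\bar U$ is the arithmetic average of the laws of $U_1,\dots,U_n$. Hence $\E\mathe^{-t\bar U^2}=\frac1n\sum_i\phi_i(t)$, because expectation is linear in the law. The AM--GM inequality then yields
\[
\Bigl(\frac1n\sum_{i=1}^n\phi_i(t)\Bigr)^n\ge\prod_{i=1}^n\phi_i(t),
\]
which is exactly the required ordering. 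Integrating against the positive weight $t^{-3/2}$ with the sign flip from $1-\E\mathe^{-tV}$ gives $\E\sqrt{\Vmf}\le\E\sqrt{\Vhet}$.

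The only delicate point is justifying the subordination formula and the exchange of integrals. We first verify the formula directly: substituting $s=tv$ gives $\sqrt v\int_0^\infty(1-\mathe^{-s})s^{-3/2}\dd s$, and integration by parts evaluates the last integral to $2\Gamma(1/2)=2\sqrt\pi$. The exchange of integrals is covered by Tonelli, since the integrand is nonnegative. The variables are bounded ($\Vhet,\Vmf\le n$), so every quantity is finite.

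Centering of the $U_i$ plays no role in this step. The argument uses only independence, the mixture structure of $\bar U$, and the complete monotonicity of $1-\sqrt{\cdot}$'s derivative structure, i.e.\ that $\sqrt{\cdot}$ is a Bernstein function.
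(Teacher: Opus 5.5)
Your proposal is correct and follows the paper's proof essentially step for step. Like the paper, you order the Laplace transforms $\E\mathe^{-t\Vmf}\ge\E\mathe^{-t\Vhet}$ using independence, the mixture structure of $\bar U$, and AM--GM, and then integrate against $t^{-3/2}$ via the subordination identity for $\sqrt{\cdot}$ and Tonelli; your explicit check that the identity's constant is $2\Gamma(1/2)=2\sqrt\pi$ is a welcome detail the paper leaves to the reader.
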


\begin{proof}
Fix $\lambda>0$. Since the law of $\bar U$ is the arithmetic average of the laws
of the $U_i$, we have
\[
\E \mathe^{-\lambda \bar U^2}
=
\frac1n\sum_{i=1}^n \E \mathe^{-\lambda U_i^2}.
\]
Therefore, by AM--GM,
\beqn
\label{eq:laplace-AM-GM}
\E \mathe^{-\lambda \Vmf}
=
\left(\E \mathe^{-\lambda \bar U^2}\right)^n
=
\left(\frac1n\sum_{i=1}^n \E \mathe^{-\lambda U_i^2}\right)^n
\ge
\prod_{i=1}^n \E \mathe^{-\lambda U_i^2}
=
\E \mathe^{-\lambda \Vhet}.
\eeqn
Now use the standard identity, easily verified by integration by parts:
\[
\sqrt{x}
=
\frac1{2\sqrt{\pi}}
\int_0^{\infty}(1-\mathe^{-\lambda x})\,\lambda^{-3/2}\,\dd\lambda,
\qquad x\ge 0.
\]
Since the integrand is nonnegative, Tonelli's theorem applies. Therefore,
applying the identity to $\Vmf$ and $\Vhet$ and using the Laplace-transform 
comparison 
\eqref{eq:laplace-AM-GM}
proves the claim.
\end{proof}

\begin{lemma}[Mass estimates for admissible measures]\label{lem:masscontrol}
Let $\mu$ be admissible and write $m:=\mu(\R)$. Then
\[
1-m\le T(\mu)\le \sqrt{1-m^2}.
\]
\end{lemma}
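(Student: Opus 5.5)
Both inequalities will come from the one-coordinate representation of Lemma~\ref{lem:score} with $n=1$, where $\Psi(y)=y$. Set $M(\dd x):=\cosh(x)\,\mu(\dd x)$, which is a probability measure by admissibility. Let $X\sim M$ and $U:=\tanh X\in[-1,1]$. Then $\E U=0$ and
\[
T(\mu)=\frac12\int\abs{\mathe^x-\mathe^{-x}}\,\mu(\dd x)=\int\abs{\sinh x}\,\mu(\dd x)=\E\abs{U}.
\]
As in the proof of Lemma~\ref{lem:signal}, we also have $m=\int\operatorname{sech}(x)\,M(\dd x)=\E\sqrt{1-U^2}$. The statement therefore reduces to two elementary inequalities about a random variable $U$ on $[-1,1]$:
\[
1-\E\sqrt{1-U^2}\;\le\;\E\abs{U}\;\le\;\sqrt{1-\paren{\E\sqrt{1-U^2}}^2}.
\]

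For the lower bound, I will use the pointwise inequality $1-\sqrt{1-u^2}\le \abs{u}$ on $[-1,1]$. It is equivalent to $\sqrt{1-u^2}\ge 1-\abs{u}$. Both sides are nonnegative, so squaring gives $1-u^2\ge 1-2\abs{u}+u^2$, i.e.\ $2\abs{u}(1-\abs{u})\ge0$, which holds. Taking expectations gives $1-m\le T(\mu)$.

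For the upper bound, I will use Cauchy--Schwarz twice, or equivalently Jensen applied to the vector $(\abs{U},\sqrt{1-U^2})$, which lies on the unit circle. By Jensen/Cauchy--Schwarz, $(\E\abs U)^2\le \E U^2$ and $(\E\sqrt{1-U^2})^2\le \E(1-U^2)$. Adding these gives
\[
(\E\abs U)^2+m^2\le \E U^2+\E(1-U^2)=1,
\]
so $T(\mu)=\E\abs U\le\sqrt{1-m^2}$.

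There is no real obstacle. The only point needing care is the identification $T(\mu)=\E\abs U$ and $m=\E\operatorname{sech}X$. Both follow immediately from $\sinh x=\tanh x\cosh x$ and $\operatorname{sech}x=\sqrt{1-\tanh^2x}$, exactly as in Lemmas~\ref{lem:score} and~\ref{lem:signal}. Centering of $U$ is not needed for either inequality.
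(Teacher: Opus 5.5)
Your proof is correct. The lower bound is the same as the paper's, but the upper bound takes a different route.

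The paper works in $x$-space throughout. For the lower bound it writes $\abs{\sinh x}=\cosh x-\mathe^{-\abs{x}}$ and uses $\mathe^{-\abs{x}}\le 1$. Your pointwise inequality $1-\sqrt{1-u^2}\le\abs{u}$, taken at $u=\tanh x$ and multiplied by $\cosh x$, is exactly $\cosh x-\abs{\sinh x}\le 1$, so this half is identical. For the upper bound the paper applies a single Cauchy--Schwarz to the factorization $\abs{\mathe^x-\mathe^{-x}}=\abs{\mathe^{x/2}-\mathe^{-x/2}}\,(\mathe^{x/2}+\mathe^{-x/2})$, and admissibility evaluates the two squared factors as $2\mp 2m$. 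In your variables that step reads $(\E\abs{U})^2\le\E\bigl[1-\sqrt{1-U^2}\bigr]\,\E\bigl[1+\sqrt{1-U^2}\bigr]=1-m^2$. You instead apply Jensen separately to $\abs{U}$ and $\sqrt{1-U^2}$, which is the same as using convexity of the Euclidean norm on the unit vector $(\abs{U},\sqrt{1-U^2})$.

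What each route buys:
\begin{itemize}
\item The paper's version is a self-contained one-line computation with no change of measure.
\item Yours reuses the $\tanh$-coordinates of Lemmas~\ref{lem:score} and~\ref{lem:signal} and passes through the slightly stronger intermediate bound $T(\mu)^2\le\E U^2$, since $\E U^2=1-\E(1-U^2)\le 1-m^2$.
\item Yours also gives one geometric picture for both halves. The point $(T(\mu),m)=\E\bigl(\abs{U},\sqrt{1-U^2}\bigr)$ is a barycenter of points on the quarter unit circle. It therefore lies in that arc's convex hull $\{(t,s):\ t+s\ge 1,\ t^2+s^2\le 1\}$, whose chord and arc are exactly the two inequalities of the lemma. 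This also shows that neither bound can be improved as a function of $m$ alone.
\end{itemize}
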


\begin{proof}
Since $\mu$ is admissible,
\[
\int \cosh(x)\,\mu(\dd x)=1.
\]
Also,
\[
T(\mu)=\int \abs{\sinh x}\,\mu(\dd x).
\]
For the lower bound, use
\[
\abs{\sinh x}=\cosh x-\mathe^{-\abs{x}}
\]
to obtain
\[
T(\mu)=1-\int \mathe^{-\abs{x}}\,\mu(\dd x)\ge 1-\mu(\R)=1-m.
\]
For the upper bound,
\[
T(\mu)
=
\frac12\int \abs{\mathe^x-\mathe^{-x}}\,\mu(\dd x)
=
\frac12\int
\abs{\mathe^{x/2}-\mathe^{-x/2}}
(\mathe^{x/2}+\mathe^{-x/2})\,\mu(\dd x).
\]
By Cauchy--Schwarz,
\[
T(\mu)^2
\le
\frac14
\left(\int (\mathe^{x/2}-\mathe^{-x/2})^2\,\mu(\dd x)\right)
\left(\int (\mathe^{x/2}+\mathe^{-x/2})^2\,\mu(\dd x)\right).
\]
Now
\[
\int (\mathe^{x/2}-\mathe^{-x/2})^2\,\mu(\dd x)
=
\int (\mathe^x+\mathe^{-x}-2)\,\mu(\dd x)
=
2-2m,
\]
and
\[
\int (\mathe^{x/2}+\mathe^{-x/2})^2\,\mu(\dd x)
=
\int (\mathe^x+\mathe^{-x}+2)\,\mu(\dd x)
=
2+2m.
\]
Hence
\[
T(\mu)^2\le (1-m)(1+m)=1-m^2.
\]
This proves the lemma.
\end{proof}

\begin{proof}[Proof of Theorem~\ref{thm:conv-w-const}]
Choose any $\varepsilon>0$ 
such that 
$\De{\varepsilon}<1$;
Lemma \ref{lem:linearization}(c)
provides an interval of such choices.
Let
$\eta_1,\dots,\eta_n$ be finitely supported admissible measures on $\R$
and put
$\bar\eta:=\frac1n\sum_{i=1}^n \eta_i$.
Lemma~\ref{lem:admiss-closure}
implies that 
$\bar\eta$,
$\bar\eta^{*n}$,
and
$\eta_1*\cdots *\eta_n$
are all admissible.
Let $U_i$, $\bar U_i$, $\alpha$, and $\nu$ be as in
Lemmas~\ref{lem:score} and~\ref{lem:signal}, and write
\[
\Shet:=\sum_{i=1}^n U_i,
\qquad
\Smf:=\sum_{i=1}^n \bar U_i.
\]
By Lemma~\ref{lem:signal},
\[
\alpha\le \nu\le 2\alpha.
\]
We consider the two regimes.
\paragraph{Case I: $\alpha\le \varepsilon$.}
We have
$
\nu\le 2\alpha\le 2\varepsilon
$
and
also, because the law of $\bar U$ is the average of the laws of the $U_i$,
\[
n\,\E \bar U^2=\sum_{i=1}^n \E U_i^2=\nu.
\]
Set
\[
b:=\sqrt{\sinh(\nu)-\nu}.
\]
Applying Lemma~\ref{lem:linearization} to $(U_1,\dots,U_n)$ and to
$(\bar U_1,\dots,\bar U_n)$, and using Lemma~\ref{lem:score}, we obtain
\[
T(\eta_1*\cdots *\eta_n)
=
\E\abs{\Psi(U_1,\dots,U_n)}
\ge
\E\abs{\Shet}-b,
\]
\[
T(\bar\eta^{*n})
=
\E\abs{\Psi(\bar U_1,\dots,\bar U_n)}
\le
\E\abs{\Smf}+b.
\]
Furthermore, by Lemma~\ref{lem:linearization},
\[
b\le \Dr{\nu}\E\abs{\Shet}
\le
\Dr{2\varepsilon}\E\abs{\Shet}
=
\De{\varepsilon}\E\abs{\Shet}.
\]
Therefore
\[
T(\eta_1*\cdots *\eta_n)\ge (1-\De{\varepsilon})\,\E\abs{\Shet}.
\]
Now Lemmas~\ref{lem:squarefunction} and~\ref{lem:laplaceorder} give
\[
\E\abs{\Smf}\le 2\E\sqrt{\Vmf},
\qquad
\E\sqrt{\Vhet}\le 2\sqrt 2\,\E\abs{\Shet},
\qquad
\E\sqrt{\Vmf}\le \E\sqrt{\Vhet}.
\]
Combining these inequalities, 
\[
\E\abs{\Smf}\le 4\sqrt 2\,\E\abs{\Shet},
\]
whence
\[
T(\bar\eta^{*n})
\le
(4\sqrt 2+\De{\varepsilon})\,\E\abs{\Shet}
\le
\frac{4\sqrt 2+\De{\varepsilon}}{1-\De{\varepsilon}}
\,T(\eta_1*\cdots *\eta_n).
\]

\paragraph{Case II: $\alpha> \varepsilon$.}
Define
\[
M:=\bar\eta(\R)^n=\left(1-\frac{\alpha}{n}\right)^n;
\]
then
$
M\le \mathe^{-\alpha}\le \mathe^{-\varepsilon}.
$
Also, writing $m_i:=\eta_i(\R)=1-\varepsilon_i$, AM--GM gives
\[
(\eta_1*\cdots *\eta_n)(\R)
=
\prod_{i=1}^n m_i
\le
\left(\frac1n\sum_{i=1}^n m_i\right)^n
=
M.
\]
Applying Lemma~\ref{lem:masscontrol} to $\bar\eta^{*n}$ and
$\eta_1*\cdots *\eta_n$ yields
\[
T(\bar\eta^{*n})\le \sqrt{1-M^2},
\qquad
T(\eta_1*\cdots *\eta_n)
\ge
1-(\eta_1*\cdots *\eta_n)(\R)
\ge
1-M.
\]
Therefore
\[
T(\bar\eta^{*n})
\le
\frac{\sqrt{1-M^2}}{1-M}\,T(\eta_1*\cdots *\eta_n)
=
\sqrt{\frac{1+M}{1-M}}\,T(\eta_1*\cdots *\eta_n)
\le
\sqrt{\frac{1+\mathe^{-\varepsilon}}{1-\mathe^{-\varepsilon}}}\,
T(\eta_1*\cdots *\eta_n).
\]

Combining the two regimes proves that
\[
T(\bar\eta^{*n})\le C(\varepsilon)\,T(\eta_1*\cdots *\eta_n).
\]
Taking the infimum over all $\varepsilon>0$ with $\De{\varepsilon}<1$
proves the theorem.
\end{proof}
\begin{remark}
To obtain explicit constants,
choosing $\varepsilon=0.04439$
yields $\De{\varepsilon}\approx0.13691$
and $C(\varepsilon)\approx 6.71287$,
which is an upper estimate on $C_0$
and provides the lower bound
$c=\frac1{C_0}>0.1489
$.
\end{remark}

\subsection{Auxiliary results}

\begin{lemma}
\label{lem:rect}
For any probability measures $\mu,\nu$ on 
a measurable space
$(\Omega,\mathcal F)$ and any $n\ge 1$,
\[
\TV(\mu^{\otimes n},\nu^{\otimes n})
=
\sup_{\mathcal E}
\TV(\mu_{\mathcal E}^{\otimes n},\nu_{\mathcal E}^{\otimes n}),
\]
where the supremum is over all finite $\mathcal F$-measurable partitions
$\mathcal E=\{E_1,\dots,E_m\}$ of $\Omega$, and
$\mu_{\mathcal E}(j)=\mu(E_j)$, $\nu_{\mathcal E}(j)=\nu(E_j)$.
\end{lemma}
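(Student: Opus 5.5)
The identity will follow from two inequalities. The direction ``$\ge$'' is immediate from data processing. For a finite measurable partition $\mathcal E=\{E_1,\dots,E_m\}$, the quotient map $q:\Omega\to[m]$, $q(\omega)=j$ iff $\omega\in E_j$, is measurable. Its product $q^{\otimes n}:\Omega^n\to[m]^n$ is measurable as well, and it pushes $\mu^{\otimes n}$ forward to $\mu_{\mathcal E}^{\otimes n}$, and likewise for $\nu$. By \cite[Theorem 7.4]{Polyanskiy_Wu_2024} we therefore get $\TV(\mu_{\mathcal E}^{\otimes n},\nu_{\mathcal E}^{\otimes n})\le \TV(\mu^{\otimes n},\nu^{\otimes n})$ for every $\mathcal E$. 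Taking the supremum gives this direction.

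For ``$\le$'' I will use an approximation argument. Fix $\delta>0$ and a set $A\in\mathcal F^{\otimes n}$ with $\mu^{\otimes n}(A)-\nu^{\otimes n}(A)>\TV(\mu^{\otimes n},\nu^{\otimes n})-\delta$. Put $\lambda:=\frac12(\mu^{\otimes n}+\nu^{\otimes n})$, which is a finite measure on $(\Omega^n,\mathcal F^{\otimes n})$. Measurable rectangles $E^1\times\cdots\times E^n$ generate $\mathcal F^{\otimes n}$, and finite disjoint unions of rectangles form an algebra $\mathcal A_0$ generating it. The standard approximation lemma for finite measures (a consequence of Carath\'eodory's extension, or equivalently a monotone class / $\pi$--$\lambda$ argument) then gives $B\in\mathcal A_0$ with $\lambda(A\triangle B)<\delta/2$. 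Since $|\mu^{\otimes n}(A)-\mu^{\otimes n}(B)|\le 2\lambda(A\triangle B)$, and the same holds for $\nu$, we obtain $\mu^{\otimes n}(B)-\nu^{\otimes n}(B)>\TV(\mu^{\otimes n},\nu^{\otimes n})-3\delta$.

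Now $B$ is a finite union of rectangles, so only finitely many sets $F_1,\dots,F_k\in\mathcal F$ appear as sides. Let $\mathcal E$ be the finite partition of $\Omega$ generated by $F_1,\dots,F_k$, i.e., the nonempty atoms $\bigcap_\ell F_\ell^{\pm}$. Each side of each rectangle is a union of atoms of $\mathcal E$, so $B=(q^{\otimes n})^{-1}(B')$ for some $B'\subseteq[m]^n$. Then
\[
\mu^{\otimes n}(B)-\nu^{\otimes n}(B)=\mu_{\mathcal E}^{\otimes n}(B')-\nu_{\mathcal E}^{\otimes n}(B')\le \TV(\mu_{\mathcal E}^{\otimes n},\nu_{\mathcal E}^{\otimes n}).
\]
Letting $\delta\downarrow0$ yields ``$\le$''.

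The only step that needs genuine care is the approximation of $A$ by an element of the generating algebra. Here it matters that $\mathcal A_0$ is an algebra, since finite disjoint unions of rectangles are closed under complements and finite intersections. The approximation has to be done simultaneously for $\mu^{\otimes n}$ and $\nu^{\otimes n}$, which is why I approximate with respect to their average $\lambda$. Everything else is bookkeeping.
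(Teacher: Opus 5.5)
Your proof is correct and follows essentially the paper's route. You approximate an arbitrary set in $\mathcal F^{\otimes n}$, with respect to $\mu^{\otimes n}+\nu^{\otimes n}$, by a set in a generating algebra, and then observe that such a set is the preimage under $q^{\otimes n}$ of a subset of $[m]^n$ for some finite partition. Your algebra of finite disjoint unions of rectangles coincides with the paper's $\bigcup_{\mathcal E}\sigma(\mathcal E)^{\otimes n}$, and the paper gets the ``$\ge$'' direction from the same chain of suprema rather than from a separate data-processing step.
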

\begin{proof}
Let $\lambda:=\mu^{\otimes n}-\nu^{\otimes n}$ and
$\tau:=\mu^{\otimes n}+\nu^{\otimes n}$.
For a finite partition $\mathcal E$, write $\mathcal G_{\mathcal E}:=\sigma(\mathcal E)$ and set
\[
\mathcal A:=\bigcup_{\mathcal E}\mathcal G_{\mathcal E}^{\otimes n}.
\]
Because common refinements of finite partitions are still finite, $\mathcal A$ is an algebra.
Moreover, $\mathcal A$ generates $\mathcal F^{\otimes n}$, since it contains every measurable
rectangle $A_1\times\cdots\times A_n$ (take $\mathcal E$ refining the finite family
$\{A_1,A_1^c,\dots,A_n,A_n^c\}$).

Hence, by the finite-measure approximation theorem for algebras
\cite[\S13, Theorem~D]{halmos1974measure}, for every
$B\in\mathcal F^{\otimes n}$ and every $\varepsilon>0$ there exists
$A\in\mathcal A$ such that $\tau(A\triangle B)<\varepsilon$.
Since $|\lambda|\le \tau$,
\[
|\lambda(B)-\lambda(A)|
\le |\lambda|(A\triangle B)
\le \tau(A\triangle B)
<\varepsilon.
\]
Therefore
\[
\TV(\mu^{\otimes n},\nu^{\otimes n})
=
\sup_{B\in\mathcal F^{\otimes n}} |\lambda(B)|
=
\sup_{A\in\mathcal A} |\lambda(A)|.
\]

Now fix $\mathcal E=\{E_1,\dots,E_m\}$ and let
$\pi_{\mathcal E}:\Omega\to[m]$ be the quotient map $\pi_{\mathcal E}(x)=j$ on $E_j$.
If $A\in\mathcal G_{\mathcal E}^{\otimes n}$, then
$A=(\pi_{\mathcal E}^{\otimes n})^{-1}(C)$ for some $C\subseteq [m]^n$, and hence
\[
|\lambda(A)|
=
\bigl|\mu_{\mathcal E}^{\otimes n}(C)-\nu_{\mathcal E}^{\otimes n}(C)\bigr|.
\]
Taking the supremum over $C\subseteq[m]^n$ gives
\[
\sup_{A\in\mathcal G_{\mathcal E}^{\otimes n}} |\lambda(A)|
=
\TV(\mu_{\mathcal E}^{\otimes n},\nu_{\mathcal E}^{\otimes n}),
\]
and the lemma follows.
\end{proof}

\begin{lemma}
\label{lem:admiss-closure}
If $P,Q$
are
strictly positive probability mass functions
on a finite set $\Omega$,
then the positive measure $\eta$
defined in \eqref{eq:etai}
(i.e., 
$
\eta =
\sum_{\omega\in\Omega}
\sqrt{P(\omega)Q(\omega)}\,
\Dirac{\frac12\log\frac{P(\omega)}{Q(\omega)}}
$)
satisfies \eqref{eq:admiss}
(i.e.,
$
\int_{\R} \mathe^x\,\eta(\dd x)
=
\int_{\R} \mathe^{-x}\,\eta(\dd x)
=1
$).
Moreover, the set of
{\em admissible}
measures on $\R$
(i.e., 
finitely supported positive measures
satisfying \eqref{eq:admiss})
is closed under 
finite convex combinations
and convolution.
\end{lemma}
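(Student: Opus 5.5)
The proof splits into three short verifications, taken in order: admissibility of the encoding, closure under convex combinations, and closure under convolution.

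\emph{Admissibility of the encoding.} For $\eta$ as in \eqref{eq:etai}, the atom at $x_\omega=\frac12\log\frac{P(\omega)}{Q(\omega)}$ carries mass $\sqrt{P(\omega)Q(\omega)}$. Since $\mathe^{x_\omega}=\sqrt{P(\omega)/Q(\omega)}$,
\[
\int_{\R}\mathe^{x}\,\eta(\dd x)=\sum_{\omega\in\Omega}\sqrt{P(\omega)Q(\omega)}\sqrt{\frac{P(\omega)}{Q(\omega)}}=\sum_{\omega\in\Omega}P(\omega)=1.
\]
Symmetrically, $\int \mathe^{-x}\,\eta(\dd x)=\sum_{\omega}Q(\omega)=1$. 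Strict positivity guarantees that the logarithms are finite, so $\eta$ is a finitely supported positive measure.

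\emph{Closure under convex combinations.} Take $\lambda_j\ge0$ with $\sum_j\lambda_j=1$ and admissible $\mu_j$. Then $\sum_j\lambda_j\mu_j$ is a positive measure whose support is contained in the finite union of the supports. Both defining integrals in \eqref{eq:admiss} are linear in the measure, so each equals $\sum_j\lambda_j\cdot1=1$.

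\emph{Closure under convolution.} By induction it suffices to treat two measures $\mu,\nu$. The convolution $\mu*\nu$ is the pushforward of $\mu\otimes\nu$ under $(x,y)\mapsto x+y$. It is therefore positive, and its support lies in the finite sumset of the two supports. The exponential is multiplicative, which gives
\[
\int \mathe^{\pm z}\,(\mu*\nu)(\dd z)=\iint \mathe^{\pm x}\mathe^{\pm y}\,\mu(\dd x)\,\nu(\dd y)=\left(\int \mathe^{\pm x}\,\mu(\dd x)\right)\left(\int \mathe^{\pm y}\,\nu(\dd y)\right)=1.
\]
For finitely supported measures these are finite sums, so Fubini applies trivially.

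I do not expect any real obstacle here. The only point requiring care is to confirm finite support at each stage, since admissibility is defined only for finitely supported measures.
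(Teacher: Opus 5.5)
Your proposal is correct and follows the paper's proof essentially step for step: you evaluate the two exponential moments of the encoding directly, use linearity for convex combinations, and use multiplicativity of $\mathe^{\pm x}$ under the pushforward by addition for convolution. Your explicit check that finite support is preserved at each stage is a small point the paper leaves implicit.
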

\begin{proof}
For the first claim, we compute
\[
\int \mathe^x\,\eta(\dd x)
=
\sum_{\omega\in\Omega}
\mathe^{\frac12
\log\frac{P(\omega)}{Q(\omega)}
}
\sqrt{P(\omega)Q(\omega)}
=
\sum_{\omega\in\Omega} P(\omega)=1,
\]
and similarly
$
\int \mathe^{-x}\,\eta(\dd x)=\sum_{\omega\in\Omega} Q(\omega)=1,
$
so $\eta$ is admissible. 

Now
if each of $\eta_1,\ldots,\eta_n$
is admissible and $a_i\ge0$, $\sum_{i=1}^n a_i=1$,
then
$\bar\eta=\sum_i a_i\eta_i$
is also admissible, since
\beq
\int \mathe^{\pm x}\,\bar\eta(\dd x)
=
\sum_{i=1}^n 
a_i
\int \mathe^{\pm x}\,\eta_i(\dd x)
=
1.
\eeq
Finally,
if $\mu$ and $\nu$ are admissible, then
\[
\int \mathe^{\pm x}\,(\mu*\nu)(\dd x)
=
\iint \mathe^{\pm(x+y)}\,\mu(\dd x)\nu(\dd y)
=
\left(\int \mathe^{\pm x}\,\mu(\dd x)\right)
\left(\int \mathe^{\pm y}\,\nu(\dd y)\right)
=
1,
\]
which proves the closure claim.

\end{proof}

\begin{lemma}
\label{lem:mult}
Let $P,Q$ be probability mass functions on $[m]$, and
define
\[
\mathcal N_{n,m}:=
\left\{
\vec k=(k_1,\dots,k_m)\in\mathbb N_0^m:\ \sum_{j=1}^m k_j=n
\right\}.
\]
Let $N:[m]^n\to \mathcal N_{n,m}$ be the count map
\[
N_j(x_1,\dots,x_n):=\sum_{t=1}^n \mathbf 1\{x_t=j\},
\qquad j\in[m].
\]
Then
\[
N_{\#}( P^{\otimes n})=\Mult(n, P),
\qquad
N_{\#}( Q^{\otimes n})=\Mult(n, Q),
\]
and
\[
\TV( P^{\otimes n}, Q^{\otimes n})
=
\TV(\Mult(n, P),\Mult(n, Q)).
\]
\end{lemma}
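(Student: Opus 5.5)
The plan is the standard sufficiency argument: show that the count vector $N$ is a sufficient statistic for the pair $\{P^{\otimes n},Q^{\otimes n}\}$. Sufficiency, together with data processing in both directions, gives equality of total variation.

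\emph{Pushforward identity.} Fix $\vec k\in\mathcal N_{n,m}$. Every $x\in N^{-1}(\vec k)$ satisfies
\[
P^{\otimes n}(x)=\prod_{t=1}^n P(x_t)=\prod_{j=1}^m P(j)^{k_j},
\]
so the likelihood depends on $x$ only through $N(x)$. The fiber $N^{-1}(\vec k)$ has exactly $\binom{n}{k_1,\dots,k_m}$ elements, namely the arrangements of the multiset. Summing over the fiber gives
\[
N_{\#}(P^{\otimes n})(\vec k)=\binom{n}{k_1,\dots,k_m}\prod_j P(j)^{k_j}=\Mult(n,P)(\vec k).
\]
The same computation applies to $Q$.

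\emph{TV equality.} One inequality is immediate: $N$ is a deterministic map and hence a Markov kernel, so the data processing inequality \cite[Theorem 7.4]{Polyanskiy_Wu_2024} yields $\TV(\Mult(n,P),\Mult(n,Q))\le\TV(P^{\otimes n},Q^{\otimes n})$.

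For the reverse inequality I would compute directly on the finite set $[m]^n$, splitting the sum by fibers:
\[
\TV(P^{\otimes n},Q^{\otimes n})=\frac12\sum_{\vec k\in\mathcal N_{n,m}}\sum_{x\in N^{-1}(\vec k)}\Bigl|\prod_j P(j)^{k_j}-\prod_j Q(j)^{k_j}\Bigr|.
\]
The summand is constant on each fiber, so the inner sum equals $\binom{n}{\vec k}$ times a single absolute difference. This is exactly $\frac12\sum_{\vec k}|\Mult(n,P)(\vec k)-\Mult(n,Q)(\vec k)|$, and since $\mathcal N_{n,m}$ is finite, that expression is the multinomial total variation.

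This direct computation proves the equality outright, so data processing is not strictly needed. Alternatively, one could note that the uniform-on-fiber kernel maps $\Mult(n,P)$ back to $P^{\otimes n}$ and apply data processing in the reverse direction.

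There is no real obstacle here. The only point requiring care is that both $P^{\otimes n}$ and $Q^{\otimes n}$ are constant on each fiber, and this is the crux, not the combinatorial count. No positivity assumption is needed.
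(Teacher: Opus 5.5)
Your proof is correct and follows essentially the same route as the paper. You derive the pushforward identity from the fiber cardinality and the fact that $P^{\otimes n}$ and $Q^{\otimes n}$ are constant on fibers, then split the $\ell_1$ sum over fibers; the paper likewise frames the lemma as a sufficiency fact and gives exactly this direct computation, so your data-processing remark is an optional extra.
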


\begin{proof}
This result
is a standard consequence of the fact that the count map is a sufficient statistic for
discrete distributions
and that sufficient statistics preserve total
variation; see, e.g., \cite[Theorem~2]{ArratiaTavare1994}
and the discussion immediately following it. We give a short proof for completeness.

The identities
\[
N_{\#}( P^{\otimes n})=\Mult(n, P),
\qquad
N_{\#}( Q^{\otimes n})=\Mult(n, Q)
\]
are the standard multinomial count representation.

Fix $\vec k=(k_1,\dots,k_m)\in\mathcal N_{n,m}$. The fiber
$N^{-1}(\vec k)$ has cardinality
\[
\binom{n}{k_1,\dots,k_m}:=\frac{n!}{k_1!\cdots k_m!},
\]
and for every $\vec x=(x_1,\dots,x_n)\in N^{-1}(\vec k)$ we have
\[
 P^{\otimes n}(\vec x)=\prod_{j=1}^m  P(j)^{k_j},
\qquad
 Q^{\otimes n}(\vec x)=\prod_{j=1}^m  Q(j)^{k_j}.
\]
Hence
\[
\Mult(n, P)(\vec k)
=
\binom{n}{k_1,\dots,k_m}\prod_{j=1}^m  P(j)^{k_j},
\qquad
\Mult(n, Q)(\vec k)
=
\binom{n}{k_1,\dots,k_m}\prod_{j=1}^m  Q(j)^{k_j}.
\]
Therefore,
\begin{align*}
2\,\TV( P^{\otimes n}, Q^{\otimes n})
&=
\sum_{\vec x\in[m]^n}
\abs{ P^{\otimes n}(\vec x)- Q^{\otimes n}(\vec x)}
\\
&=
\sum_{\vec k\in\mathcal N_{n,m}}
\sum_{\vec x\in N^{-1}(\vec k)}
\abs{ P^{\otimes n}(\vec x)- Q^{\otimes n}(\vec x)}
\\
&=
\sum_{\vec k\in\mathcal N_{n,m}}
\binom{n}{k_1,\dots,k_m}
\abs{
\prod_{j=1}^m  P(j)^{k_j}
-
\prod_{j=1}^m  Q(j)^{k_j}}
\\
&=
\sum_{\vec k\in\mathcal N_{n,m}}
\abs{\Mult(n, P)(\vec k)-\Mult(n, Q)(\vec k)}
\\
&=
2\,\TV(\Mult(n, P),\Mult(n, Q)).
\end{align*}
\end{proof}


\end{document}